\documentclass[a4paper,11pt]{amsart}

\usepackage[utf8]{inputenc}
\usepackage[english]{babel}
\usepackage[left=1.12in, right=1.12in, top=1.5in, bottom=1.5in]{geometry}

\usepackage{amsmath,amssymb,amsthm,amsfonts}
\usepackage{mathtools}
\usepackage{bbm}

\usepackage{graphicx}
\usepackage{color}
\usepackage{longtable}
\usepackage{multirow}
\usepackage{stmaryrd}
\usepackage{paralist}
\usepackage{appendix}
\usepackage{float}
\allowdisplaybreaks[4]

\newtheorem{theorem}{Theorem}[section]
\newtheorem{lemma}[theorem]{Lemma}

\theoremstyle{definition}

\theoremstyle{remark}
\newtheorem{remark}[theorem]{Remark}

\title[Unitary and Nonunitary Representations of the Heisenberg--Weyl Lie Algebra]{Unitary and Nonunitary Representations of the Heisenberg--Weyl Lie Algebra}

\date{\today}

\begin{document}

\keywords{
Heisenberg--Weyl Lie algebra,
Schr\"odinger representations,
unitary and nonunitary representations,
tensor products,
intertwining operators,
indecomposable representations,
symplectic Lie algebras}

\subjclass[2020]{17B10,  17B20, 17B30, 22E25, 22E27}

\begin{abstract}
We examine unitary and nonunitary representations of the Heisenberg--Weyl Lie algebra
$\mathfrak{hw}_n$, with particular emphasis on tensor products of unitary
representations and on indecomposable nonunitary representations.
In the unitary setting, the irreducible representations with nontrivial central
character are the Schr\"odinger representations, as classified by the
Stone--von Neumann theorem. Although tensor products of these representations are considered in the literature, we give a detailed Lie--algebraic analysis  and construct explicit unitary intertwining operators,
including the case where the central characters sum to zero. 
In the nonunitary setting, we consider a natural realization of $\mathfrak{hw}_n$ as a subalgebra of
the real symplectic Lie algebra $\mathfrak{sp}_{2n+2}(\mathbb R)$ and prove that
every finite--dimensional complex irreducible representation of
$\mathfrak{sp}_{2n+2}(\mathbb{R})$
remains indecomposable upon restriction to $\mathfrak{hw}_n$.
This yields a large natural family of finite--dimensional, nonunitary
indecomposable representations of $\mathfrak{hw}_n$.
\end{abstract}

\author[A. Douglas]{Andrew Douglas$^{1,2,3}$}
\address[]{$^1$Department of Mathematics, New York City College of Technology, City University of New York, Brooklyn, NY, 11201, USA.}
\address[]{$^2$Ph.D. Programs in Mathematics and Physics, Graduate Center, City University of New York, New York, NY, 10016, USA.}

\author[H. de Guise]{Hubert de Guise$^4$}
\address{$^4$Department of Physics, Lakehead University, Thunder Bay, ON,  P7B 5E1, Canada}

\author[J. Repka]{Joe Repka$^3$}
\address{$^3$Department of Mathematics, University of Toronto, Toronto, ON, M5S 2E4, Canada}

\dedicatory{Dedicated to the memory of our friend and collaborator Joe Repka.}

\maketitle

\section{Introduction}

The commutation relation between position $Q$ and momentum $P$ operators (in one dimension) originally obtained by Heisenberg early in his formulation of quantum mechanics by infinite matrices
\begin{align}
     P Q- Q  P: = [ P, Q]=-i\hbar \mathbf{1}\, , \label{eq:horiginal}
\end{align}
where $\hbar=h/2\pi$ is the reduced Planck constant, is central to quantum mechanics and encapsulates some of the unexpected features of the theory such as wave-particle duality.  Quoting Max Born \cite{Born}:

``I shall never forget the thrill which I experienced when I 
succeeded in condensing Heisenberg's ideas on quantum conditions
in the mysterious equation $PQ-QP=\frac{h}{2\pi i}$, which is the centre of the new mechanics and was later found to imply the uncertainty relations.''  

Because $Q$ and $P$ are unbounded operators, the exponential form proposed by Weyl \cite{weyl} for the unitary representation $\pi_\lambda(x,y,t)$ of the Heisenberg-Weyl (or $HW$) \textit{group}, defined using the irreps $\rho$ and $\sigma$ of $\mathbb{R}$, is given (in a notation modified from Folland \cite{folland}) by  
\begin{equation}
\begin{aligned}
\pi_\lambda(x,y,z)&=e^{i(x  Q+y  P+t  Z)}=
    e^{ix  Q}e^{iy   P}e^{i(t+\frac12 xy)  Z}=
    e^{iy   P}e^{ix  Q}e^{i(t-\frac12 xy)  Z}\, ,\label{eq:weyformm} \\
&=e^{i \lambda t +i\frac\lambda2 xy}\sigma(x)\rho(y)=
e^{ i \lambda t-i\frac\lambda2 xy}\rho(y)\sigma(x)\, ,\\
\rho(y)&=e^{i y   P}\, ,\qquad \sigma(x)=e^{i x   Q}
\end{aligned}
\end{equation}
and thereby avoids the domain subtleties associated with the unbounded operators $Q$ and $P$.

We now \textit{define} a Lie algebra version of Eq.~(\ref{eq:horiginal})  more useful for our purposes:
\begin{align}
    [Q, P]&=  Z\, ,\quad 
    [  Q, Z]=0\, ,\quad [  P,  Z]=0\,.
    \label{eq:ourhw}
\end{align}
One would be forgiven to suppose that the central element $Z$ is  proportional to the unit matrix, but taking the trace of 
Eq.~(\ref{eq:horiginal}) immediately shows that this cannot hold for finite--dimensional representations. 

For finitely many degrees of freedom $n$, the $n$ pairs $P_k,\,Q_k$ satisfying the canonical commutation relations
\begin{equation}\label{commutation}
\begin{aligned}
    &[P_k,Q_\ell] = \delta_{k\ell}   Z, \\
    &[P_k, P_\ell] =[Q_k,Q_\ell]=[Z,  P_k]=[
    Z, Q_k]=0,
\end{aligned}
\qquad 1 \le k, \,\ell \le n.
\end{equation}
defining the Heisenberg-Weyl algebra $\mathfrak{hw}_n$. 

 Note that here as well as in Eq.~(\ref{eq:ourhw}), the non-zero commutator is defined without the factor $i$ common in physics. 
 
In this work, some representations of $\mathfrak{hw}_n$ are obtained by differentiating unitary representations of $HW_n$ on the dense invariant Schwartz subspace $\mathcal S(\mathbb R^n)$, while others arise by restriction from finite--dimensional, irreducible representations of $\mathfrak{sp}_{2n+2}(\mathbb R)$. Subtleties related to unbounded operators and domains in infinite--dimensional representations are discussed, for instance, in \cite{DorfmeisterDorfmeister,Garrison-Wong}.

The original relation of Eq.~(\ref{eq:horiginal}) has resonated and still resonates well beyond its original scope.  The Heisenberg--Weyl group $HW_n$ is now ubiquitous in physics and mathematics (see for instance of recent examples in a wide range of areas: \cite{Frahm}\cite{Cushman}\cite{Almheiri}.) 
It plays an important role classical invariant theory,
representation theory of nilpotent Lie groups, reductive algebraic groups, and abelian harmonic analysis, to name but a few \cite{howe, KnappLieGroups}.
In physics, the Heisenberg–Weyl algebra $\mathfrak{hw}_n$ underlies coherent states and phase-space methods (for finite $n$) \cite{Glauber,Sudarshan,Perelomov,Schleich}

These connections are reflected in the rich representation theory of $HW_n$ and $\mathfrak{hw}_n$;  note that in the
mathematics literature, this group is often referred to simply as the
\emph{Heisenberg group}.

In this article, we examine unitary and nonunitary representations of the
Heisenberg--Weyl Lie group $HW_n$ and Lie algebra $\mathfrak{hw}_n$, with a focus on
tensor products in the unitary setting and on indecomposable representations in the
nonunitary setting. 
First, we describe the irreducible unitary representations of $HW_n$
and $\mathfrak{hw}_n$, which are given by the \emph{Schr\"odinger representations},  as classified by the Stone--von Neumann theorem. The unitary representation theory of the Heisenberg--Weyl group is classical and appears widely in the physics literature; see, for example, \cite{ali-englis-rmp, degosson,  stone1932,summers-stone-vn}.

We then examine tensor products of these infinite--dimensional $\mathfrak{hw}_n$-representations and formulate explicit equivalences. Building on classical results
(cf.\ \cite{corwin-greenleaf, folland,  degosson, hall, howe, mackey, thangavelu, VilenkinKlimyk}), we present a Lie--algebraic
treatment of these equivalences and give explicit constructions of the corresponding intertwining operators. While tensor product representations associated with the Heisenberg--Weyl Lie algebra
are discussed in the literature (e.g.~\cite{arai}), a detailed Lie--algebraic
analysis of tensor products of Schr\"odinger representations, including explicit unitary intertwining operators, does not appear to be available. In particular, a systematic representation-theoretic treatment of the case in which the central characters sum to zero 
apparently remains absent from the literature.

Next, we construct a large family of finite--dimensional, nonunitary,
indecomposable representations of $\mathfrak{hw}_n$ via symplectic embedding. 
Specifically, we consider a natural subalgebra of  $\mathfrak{sp}_{2n+2}(\mathbb R)$ that is isomorphic to $\mathfrak{hw}_n$  and show that every finite--dimensional complex
irreducible representation of $\mathfrak{sp}_{2n+2}(\mathbb R)$ remains indecomposable when
restricted to $\mathfrak{hw}_n$.
This yields a large family of finite--dimensional indecomposable
representations of $\mathfrak{hw}_n$.

The article is organized as follows. Section~2 contains a detailed description
of the Heisenberg--Weyl group $HW_n$ and its Lie algebra $\mathfrak{hw}_n$. In Section~3, we define the Schr\"odinger representations of $HW_n$ and $\mathfrak{hw}_n$, and then examine tensor products of Schr\"odinger representations of $\mathfrak{hw}_n$, establishing equivalences through the explicit construction of intertwining operators. In Section~4, we construct a large family of finite--dimensional, indecomposable,
nonunitary representations of $\mathfrak{hw}_n$ through symplectic embedding.  

Finally, we provide in Appendix \ref{sec:appendix} a short comment on and an explicit construction of lowest weight states for each irrep with central characters $\lambda+\mu$, with $\lambda,\mu > 0$, in the decomposition of tensor product of irreps labeled by $\lambda$ and $\mu$.

\section{The Heisenberg--Weyl Group and Lie Algebra}

The \emph{Heisenberg--Weyl group $HW_n$} is a connected, simply connected, noncompact,
nilpotent Lie group. 
$HW_n$ may be realized as a matrix group under matrix multiplication:
\begin{equation}\label{expHW}
HW_n
=
\left\{
\begin{pmatrix}
1 & x & t+\tfrac12 x\cdot y \\
0 & I_n & y \\
0 & 0 & 1
\end{pmatrix}
\;\middle|\;
x\in \mathbb{R}^{1\times n},\; y\in \mathbb{R}^{n\times 1},\; t\in \mathbb{R}
\right\}.
\end{equation}
The center of $HW_n$ is the one--dimensional subgroup
\begin{equation}
\mathcal Z(HW_n)
=
\left\{
\begin{pmatrix}
1 & 0 & t \\
0 & I_n & 0 \\
0 & 0 & 1
\end{pmatrix}
\;\middle|\;
t \in \mathbb{R}
\right\}
\cong \mathbb{R}.
\end{equation}

The \emph{Heisenberg--Weyl Lie algebra $\mathfrak{hw}_n$} is the $(2n+1)$--dimensional
real Lie algebra 
\begin{equation}\label{exphw}
\mathfrak{hw}_n
=
\left\{
\begin{pmatrix}
0 & x & t \\
0 & 0 & y \\
0 & 0 & 0
\end{pmatrix}
\;\middle|\;
x\in \mathbb{R}^{1\times n},\; y\in \mathbb{R}^{n\times 1},\; t\in \mathbb{R}
\right\},
\end{equation}
with Lie bracket given by the commutator. 
If \(e_1,\dots,e_n\) is the standard basis of \(\mathbb R^n\), a basis of $\mathfrak{hw}_n$ is 
\begin{equation}
P_k=
\begin{pmatrix}
0 & e_k^{\top} & 0\\
0 & 0 & 0\\
0 & 0 & 0
\end{pmatrix},
\qquad
Q_k=
\begin{pmatrix}
0 & 0 & 0\\
0 & 0 & e_k\\
0 & 0 & 0
\end{pmatrix},
\qquad
Z=
\begin{pmatrix}
0 & 0 & 1\\
0 & 0 & 0\\
0 & 0 & 0
\end{pmatrix}, \quad 1\le k \le n,
\end{equation}
with commutation relations given in Eq.~(\ref{commutation}).
The element $Z$ spans the center of $\mathfrak{hw}_n$, and
\begin{equation}
[\mathfrak{hw}_n,\mathfrak{hw}_n] = \mathbb{R}Z.
\end{equation}

Since the Heisenberg--Weyl group is connected, simply connected,  and nilpotent, the exponential map from $\mathfrak{hw}_n$ to $HW_n$ is a diffeomorphism (cf. \cite[Proposition 1.62]{folland2025}), in particular it is bijective. Specifically, the representative element of $\mathfrak{hw}_n$ in Eq.~\eqref{exphw} is exponentiated to the representative element of $HW_n$ from Eq.~\eqref{expHW}:
\begin{equation}
\exp\!\left(
\underbrace{
\begin{pmatrix}
0 & x & t \\
0 & 0 & y \\
0 & 0 & 0
\end{pmatrix}
}_{\in\; \mathfrak{hw}_n}
\right)
\; = \;
\underbrace{
\begin{pmatrix}
1 & x & t+\tfrac12 x\cdot y \\
0 & I_n & y \\
0 & 0 & 1
\end{pmatrix}
}_{\in \;HW_n}.
\end{equation}
(Please note: there is no prefactor of $i$ in the exponentiation of the algebra element, as is common usage in physics.  We follow in this the convention used in mathematics, which is especially useful here as we are discussing unitary and non-unitary representations in the same paper.)

\section{Unitary Representations: Schr\"odinger Representations}

The Schr\"odinger representations of $HW_n$ are realized on the Hilbert space
$L^{2}(\mathbb{R}^{n})$,
the space of equivalence classes of measurable functions
$f:\mathbb{R}^{n}\to\mathbb{C}$ satisfying
\begin{equation}
\int_{\mathbb{R}^{n}} |f(u)|^{2}\,du < \infty,
\end{equation}
where functions that agree almost everywhere are identified.
All integrals are taken with respect to Lebesgue measure.
The inner product is given by
\begin{equation}
\langle f,g\rangle
=
\int_{\mathbb{R}^{n}} f(u)\,\overline{g(u)}\,du.
\label{eq:innerproduct}
\end{equation}

  For $\lambda \in \mathbb{R}\setminus\{0\}$, the \emph{Schr\"odinger representation} 
  \begin{equation} \pi_\lambda \colon HW_n \longrightarrow U\bigl(L^2(\mathbb{R}^n)\bigr) 
  \end{equation} 
  is defined by 
  \begin{equation}
  \left(\pi_\lambda\begin{pmatrix}
1 & x & t+\tfrac12 x\cdot y \\
0 & I_n & y \\
0 & 0 & 1
\end{pmatrix}f\right)(u) = e^{ i\left(\lambda t + y\cdot u + \tfrac12 \lambda x\cdot y\right)}\, f(u+\lambda x), 
\end{equation} 
for $f \in L^2(\mathbb{R}^n),\ u \in \mathbb{R}^n$ (cf.\ \cite{folland}).
  In particular, the center acts by 
  \begin{equation} \pi_\lambda\left(\begin{pmatrix}
1 & 0 & t \\
0 & I_n & 0 \\
0 & 0 & 1
\end{pmatrix}\right) = e^{ i\,\lambda t}\,I. \end{equation}

 It is well known that for $\lambda \neq 0$, the representation $\pi_\lambda$ is irreducible, and $\pi_{\lambda}$ and $\pi_{\mu}$ are inequivalent for $\lambda\neq \mu$ (cf.\ \cite{folland}).
Moreover, the following famous theorem establishes that these Schr\"odinger
representations exhaust all infinite--dimensional unitary representations of
$HW_n$. Note that we omit a factor of $2\pi$ from the usual definition of the Schr\"odinger representation given in \cite{folland}, in order to align with standard conventions in physics.

\begin{theorem}[Stone--von Neumann, group version]
For each $\lambda \in \mathbb{R}\setminus\{0\}$, there exists, up to unitary
equivalence, a unique irreducible unitary representation of the Heisenberg--Weyl
group $HW_n$ with central character
\begin{equation}
\begin{pmatrix}
1 & 0 & t \\
0 & I_n & 0 \\
0 & 0 & 1
\end{pmatrix} \longmapsto e^{i\,\lambda t}.
\end{equation}
This representation is unitarily equivalent to the Schr\"odinger
representation $\pi_\lambda$ on $L^2(\mathbb{R}^n)$.
\end{theorem}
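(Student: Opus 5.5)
Given an irreducible unitary representation $\pi$ of $HW_n$ on a Hilbert space $\mathcal H$ with central character $e^{i\lambda t}$, we will build a unitary intertwiner $\mathcal H \to L^2(\mathbb R^n)$ onto $\pi_\lambda$. The approach is the classical Weyl-operator (twisted convolution) argument.

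\emph{Step 1: reduce to Weyl operators.} For $(x,y)\in\mathbb R^n\times\mathbb R^n$ set $W(x,y)=\pi(\exp(x,y,0))$. By the exponential formula of Section~2 and the central character, the map $(x,y)\mapsto W(x,y)$ is strongly continuous and satisfies the Weyl relations
\[
W(x,y)W(x',y')=e^{\frac{i\lambda}{2}(x\cdot y'-x'\cdot y)}\,W(x+x',y+y'),
\]
up to the sign convention for the symplectic form. Since the center acts by scalars, $\pi$ is determined by $W$, and irreducibility of $\pi$ is the same as irreducibility of $W$. The Schr\"odinger representation $\pi_\lambda$ gives such a system $W_\lambda$ on $L^2(\mathbb R^n)$. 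Rescaling $x\mapsto x/\lambda$ lets us normalise to a fixed nonzero constant, and we treat $\lambda>0$ and $\lambda<0$ uniformly.

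\emph{Step 2: the Gaussian projection.} For $F\in L^1(\mathbb R^{2n})$ define
\[
W(F)=\int F(x,y)\,W(x,y)\,dx\,dy .
\]
Then $F\mapsto W(F)$ turns twisted convolution into composition and complex conjugation with reflection into adjoints. Moreover $W(F)=0$ forces $F=0$: apply $W(x,y)^*W(F)W(x,y)$, which multiplies $F$ by characters, and use Fourier injectivity with strong continuity. Choose the Gaussian $F_0(x,y)=c\,e^{-\frac{|\lambda|}{4}(|x|^2+|y|^2)}$, with $c$ chosen so that $E=W(F_0)$ satisfies
\[
E^*=E,\qquad E\,W(x,y)\,E=e^{-\frac{|\lambda|}{4}(|x|^2+|y|^2)}\,E .
\]
Taking $(x,y)=0$ gives $E^2=E$, and $E\neq 0$ by injectivity. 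Both identities are explicit Gaussian integrals. \textbf{This computation is the main obstacle}: we must get the constant and the sign of the cocycle exactly right, and the simplest check is to verify it first in $\pi_\lambda$, where $E$ is the projection onto the ground-state Gaussian.

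\emph{Step 3: build the intertwiner.} Pick a unit vector $\phi\in E\mathcal H$ and set $\phi_{x,y}=W(x,y)\phi$. The relations above give
\[
\langle \phi_{x,y},\phi_{x',y'}\rangle=\text{(phase)}\cdot e^{-\frac{|\lambda|}{4}(|x-x'|^2+|y-y'|^2)} .
\]
This quantity is independent of the representation, so it agrees with the corresponding inner products of the Gaussian $\phi_0$ in $L^2(\mathbb R^n)$. The span of $\{\phi_{x,y}\}$ is invariant and nonzero, hence dense in $\mathcal H$ by irreducibility; likewise the span of the Gaussian translates is dense in $L^2(\mathbb R^n)$ by irreducibility of $\pi_\lambda$, which the paper records as well known. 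The map $\phi_{x,y}\mapsto W_\lambda(x,y)\phi_0$ therefore preserves inner products, extends to a unitary operator, and intertwines $W$ with $W_\lambda$, hence $\pi$ with $\pi_\lambda$.

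\emph{Step 4: uniqueness.} Uniqueness up to unitary equivalence follows because any two such representations are each equivalent to $\pi_\lambda$. Irreducibility forces $\dim E\mathcal H=1$, though that fact is not needed here.
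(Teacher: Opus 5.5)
The paper gives no proof of this theorem. It quotes it as the classical Stone--von Neumann theorem with a reference to \cite{folland}. Your proposal is correct, and it is essentially the standard twisted-convolution (von Neumann) argument from that reference, written in the paper's conventions. So the comparison is citation versus a self-contained proof. Your route has the advantage of producing an explicit intertwiner $W(z)\phi\mapsto W_\lambda(z)\phi_0$.

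The only external input you use is the irreducibility of $\pi_\lambda$, which the paper asserts. If you want to avoid it, the same machinery supplies it:
\begin{itemize}
\item a direct computation shows that $E_\lambda=W_\lambda(F_0)$ is the rank-one projection onto $\phi_0$;
\item by your injectivity step, $E_\lambda$ is nonzero on every nonzero closed invariant subspace;
\item $E_\lambda$ commutes with the projection onto any such subspace, so a proper nonzero one would force rank at least $2$.
\end{itemize}

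The computation you single out as the main obstacle does go through. With $\omega(z,z')=x\cdot y'-x'\cdot y$, the cocycle is exactly the one you wrote; the paper's matrix realization leaves no sign ambiguity. Take $a=|\lambda|/4$ and $F_0=c\,e^{-a|z|^2}$. Then $EW(w)E=W(G_w)$, and the substitution $z=\tfrac{s-w}{2}+y$ in the twisted convolution gives
\[
G_w(s)=c^2\Bigl(\frac{\pi}{2a}\Bigr)^{n} e^{\frac{i\lambda}{2}(\omega(w,s)+\omega(s,w))}\,e^{-\frac a2\left(|s-w|^2+|s+w|^2\right)}=c^2\Bigl(\frac{\pi}{2a}\Bigr)^{n}e^{-a|w|^2}e^{-a|s|^2}.
\]
Here $e^{-\frac a2|s+w|^2}$ comes from the Fourier transform of $e^{-2a|y|^2}$ at a frequency of length $\tfrac{|\lambda|}{2}|s+w|$. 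This is precisely where $a=|\lambda|/4$ is forced, since the $s\cdot w$ cross terms cancel only for that width. It is also insensitive to the sign of $\lambda$. Hence $c=(|\lambda|/2\pi)^n$.

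On the Schr\"odinger side, $\phi_0(u)=(\pi|\lambda|)^{-n/4}e^{-|u|^2/2|\lambda|}$ satisfies $\langle W_\lambda(x,y)\phi_0,\phi_0\rangle=e^{-\frac{|\lambda|}{4}(|x|^2+|y|^2)}$. Therefore $\langle E_\lambda\phi_0,\phi_0\rangle=1$, so $\phi_0\in E_\lambda L^2(\mathbb R^n)$, consistent with the ground state in the paper's appendix. The rescaling remark in Step 1 is never used and can be dropped.
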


\begin{remark}[Vanishing central character]
If $\lambda=0$, the center of $HW_n$ acts trivially, and any unitary representation
of $HW_n$ with this property factors through the abelian quotient
\begin{equation}
HW_n/\mathcal{Z}(HW_n)\cong \mathbb R^{2n}.
\end{equation}
Consequently, every such representation is a unitary representation of an
abelian group and decomposes as a direct integral of one--dimensional characters
\cite[Theorem~7.28]{folland-aha}. In
particular, all irreducible unitary representations with trivial central action
are one--dimensional characters.
\end{remark}

\subsection{Derived Representations of $\mathfrak{hw}_n$}

Let $\mathcal{S}(\mathbb{R}^n)$ denote the \emph{Schwartz space}, consisting of all
smooth functions $f \colon \mathbb{R}^n \to \mathbb{C}$ such that, for every
multi--index $\alpha,\beta \in \mathbb{N}^n$,
\begin{equation}
\sup_{u \in \mathbb{R}^n}
\bigl|\,u^\alpha\, \partial^\beta f(u)\,\bigr| < \infty.
\end{equation}
The space $\mathcal{S}(\mathbb{R}^n)$ is a dense, $\mathfrak{hw}_n$--invariant subspace
of $L^2(\mathbb{R}^n)$ on which the derived Schr\"odinger representation
$d\pi_\lambda$ is well defined.
Differentiating the Schr\"odinger representation $\pi_\lambda$ yields the derived
representation
\begin{equation}
d\pi_\lambda \colon \mathfrak{hw}_n \longrightarrow
\mathrm{End}\bigl(\mathcal{S}(\mathbb{R}^n)\bigr),
\qquad
d\pi_\lambda(X)f=\left. \frac{d}{ds}\pi_\lambda(\exp(sX))f\right|_{s=0}
\end{equation}
It is given explicitly, for $f\in\mathcal S(\mathbb R^n)$, by
\begin{align}
(d\pi_\lambda(Q_k)f)(u) &=  i\, u_k\, f(u), \nonumber \\
(d\pi_\lambda(P_k)f)(u) &= \lambda\, \frac{\partial f}{\partial u_k}(u), \label{eq:action}\\
(d\pi_\lambda(Z)f)(u)   &=  i\,\lambda\, f(u),  \nonumber 
\end{align}
for $1\le k \le n.$ (Note the placement of the factors of $i$ in the above.  For the choice more common in physics, see for instance \cite{Wolf}.) 
These operators satisfy the Heisenberg--Weyl commutation relations Eq.~\eqref{commutation}
\begin{equation}
\begin{aligned}
[d\pi_\lambda(P_k), d\pi_\lambda(Q_\ell)]
&=
\delta_{k \ell}\, d\pi_\lambda(Z),\\
[d\pi_\lambda(P_k),d\pi_\lambda(P_\ell)]&=[d\pi_\lambda(Q_k),d\pi_\lambda(Q_\ell)]=0,\\
[d\pi_\lambda(Z),d\pi_\lambda(P_k)]&=[d\pi_\lambda(Z),d\pi_\lambda(Q_k)]=0,
\end{aligned}
\end{equation}
for $1\le k,\, \ell \le n$.
Let $\mathfrak{hw}_n=\mathfrak p\oplus \mathfrak q\oplus \mathbb R Z$, as vector spaces, where
\begin{equation}
\mathfrak q:=\operatorname{span}_{\mathbb{R}}\{Q_1,\dots,Q_n\}\cong \mathbb R^n,\qquad
\mathfrak p:=\operatorname{span}_{\mathbb{R}}\{P_1,\dots,P_n\}\cong \mathbb R^n.
\end{equation}
Then we may equivalently describe the action of $\mathfrak{hw}_n$ in vector form as
\begin{equation}
\begin{aligned}
(d\pi_\lambda(Q)f)(u)
&=  i\, (Q\cdot u)\, f(u), \\
(d\pi_\lambda(P)f)(u)
&= \lambda\, (P\cdot \nabla) f(u), \\
(d\pi_\lambda(Z)f)(u)
&=  i\,\lambda\, f(u),
\end{aligned}
\end{equation}
for $Q \in \mathfrak q$ and $P \in \mathfrak p$.

\begin{theorem}[Stone--von Neumann, Lie algebra version]
Let $\mathfrak{hw}_n$ be the Heisenberg--Weyl Lie algebra.
Every irreducible unitary representation of $\mathfrak{hw}_n$
arising as the derived representation of a unitary representation of $HW_n$,
with central character
\begin{equation}
d\pi(Z)=  i\,\lambda\, I,
\qquad \lambda \neq 0,
\end{equation}
is unitarily equivalent to the Schr\"odinger representation
$d\pi_\lambda$, realized on the Schwartz space $\mathcal S(\mathbb R^n)$
as a dense invariant subspace of $L^2(\mathbb R^n)$.
\end{theorem}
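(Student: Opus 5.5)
The plan is to reduce the statement to the group version of the Stone--von Neumann theorem stated above and then pass to derived representations. Let $d\pi$ be an irreducible unitary representation of $\mathfrak{hw}_n$ that arises as the derived representation of a unitary representation $\pi$ of $HW_n$ on a Hilbert space $\mathcal H$. We take $d\pi$ to act on the space $\mathcal H^\infty$ of smooth vectors, or on a dense invariant domain $\mathcal D\subset\mathcal H^\infty$ that is a core for each $d\pi(X)$. Assume $d\pi(Z)=i\lambda I$ with $\lambda\neq 0$.

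\textbf{Step 1: the central character of $\pi$.} For $v\in\mathcal D$ and $s\in\mathbb R$, set $v_s=\pi(\exp(sZ))v$. Because $Z$ is central, $\pi(\exp sZ)$ commutes with $\pi(HW_n)$ and preserves smooth vectors. Hence $\frac{d}{ds}v_s=d\pi(Z)v_s=i\lambda v_s$, and so $\pi(\exp sZ)v=e^{i\lambda s}v$. By density and continuity this holds on all of $\mathcal H$. Using the matrix description of $\exp$ from Section~2, the central character of $\pi$ is therefore $t\mapsto e^{i\lambda t}$.

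\textbf{Step 2: irreducibility of $\pi$.} Suppose $\mathcal K\subset\mathcal H$ is a closed $\pi$-invariant subspace with orthogonal projection $E$. Then $E$ commutes with each $\pi(g)$. It therefore maps smooth vectors to smooth vectors and commutes with $d\pi(X)$ on $\mathcal H^\infty$. So $E\mathcal H^\infty$ is a $d\pi$-invariant subspace, and irreducibility of $d\pi$ forces $E\mathcal H^\infty$ to be $0$ or all of $\mathcal H^\infty$. Since $\mathcal H^\infty$ is dense, $E=0$ or $E=I$. I expect this step to be the main obstacle, because it depends on what "irreducible" means for an unbounded representation on a dense domain. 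I would adopt the convention that there is no nontrivial closed subspace whose projection commutes with the closures of the operators $d\pi(X)$. Under that convention the argument is immediate, because the closure of $d\pi(X)$ is the skew-adjoint generator of $s\mapsto\pi(\exp sX)$ by Stone's theorem.

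\textbf{Step 3: apply the group theorem and differentiate.} By Steps 1 and 2 and the group version of the theorem, there is a unitary $U:\mathcal H\to L^2(\mathbb R^n)$ with $U\pi(g)U^{-1}=\pi_\lambda(g)$ for all $g$. Differentiating, $U$ maps $\mathcal H^\infty$ onto the smooth vectors of $\pi_\lambda$ and satisfies $U\,d\pi(X)\,U^{-1}=d\pi_\lambda(X)$. It remains to identify the smooth vectors of $\pi_\lambda$ with $\mathcal S(\mathbb R^n)$; this is the classical theorem of Dixmier (see \cite{folland}) and also follows from the Hermite-function description of the harmonic oscillator. Finally, differentiating the explicit formula for $\pi_\lambda$ along $\exp(sQ_k)$, $\exp(sP_k)$ and $\exp(sZ)$ recovers Eq.~\eqref{eq:action}. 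This gives the unitary equivalence with $d\pi_\lambda$ on $\mathcal S(\mathbb R^n)$.
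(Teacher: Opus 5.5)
Your argument is correct and is exactly the reduction the paper leaves implicit: the paper gives no proof of this statement, and presents it simply as the derived-representation form of the group Stone--von Neumann theorem, with $\mathcal S(\mathbb R^n)$ as the space of smooth vectors, which is what your Steps 1--3 carry out. Your choice in Step 2 of the closure (commutant) notion of irreducibility is also the right one: $d\pi_\lambda$ on $\mathcal S(\mathbb R^n)$ is not algebraically irreducible, since the span of polynomials times $e^{-|u|^2/2}$ is a proper invariant subspace, so under the purely algebraic convention on $\mathcal H^\infty$ the hypothesis could never be satisfied.
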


\subsection{Tensor Products of Schr\"odinger Representations}

Throughout this subsection, let $d\pi_\lambda$, with $\lambda\neq 0$, denote the
derived Schr\"odinger representation of the Heisenberg--Weyl Lie algebra
$\mathfrak{hw}_n$ on the Schwartz space $\mathcal S(\mathbb R^n)$. In the following two theorems, we consider the tensor product $d\pi_\lambda \otimes d\pi_\mu$ for the  two cases $\lambda+\mu\ne0$ and $\lambda+\mu=0$, respectively.

\begin{theorem}\label{thm:nonzerocase}
Let $\lambda,\mu \in \mathbb R\setminus \{0\}$ with $\lambda+\mu \neq 0$.
Then, as representations of the Heisenberg--Weyl Lie algebra $\mathfrak{hw}_n$, we have the unitary equivalence
\begin{equation}
d\pi_\lambda \otimes d\pi_\mu \;\cong\; d\pi_{\lambda+\mu}\otimes \mathbbm 1,
\end{equation}
where $d\pi_{\lambda+\mu}$ acts on $\mathcal S(\mathbb R^n)$ and $\mathbbm 1$ denotes the trivial representation of $\mathfrak{hw}_n$ on an auxiliary copy of
$\mathcal S(\mathbb R^n)$ (i.e., $\mathfrak{hw}_n$ acts by $0$ on that factor). In particular, $d\pi_\lambda \otimes d\pi_\mu$ is equivalent to
$d\pi_{\lambda+\mu}$ with infinite multiplicity.
\end{theorem}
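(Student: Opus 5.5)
We will construct an explicit linear change of variables on $\mathbb R^{2n}$ and show it implements the equivalence. The tensor product $d\pi_\lambda\otimes d\pi_\mu$ acts on $\mathcal S(\mathbb R^n)\otimes\mathcal S(\mathbb R^n)$, which we complete to $\mathcal S(\mathbb R^{2n})\subset L^2(\mathbb R^{2n})$, with variables $(u,v)$. Each $X\in\mathfrak{hw}_n$ acts by $d\pi_\lambda(X)\otimes I+I\otimes d\pi_\mu(X)$. By Eq.~\eqref{eq:action} this gives
\begin{equation}
Q_k\mapsto i(u_k+v_k),\qquad P_k\mapsto \lambda\partial_{u_k}+\mu\partial_{v_k},\qquad Z\mapsto i(\lambda+\mu).
\end{equation}
The target $d\pi_{\lambda+\mu}\otimes\mathbbm 1$, in variables $(s,w)$, acts by
\begin{equation}
Q_k\mapsto i s_k,\qquad P_k\mapsto(\lambda+\mu)\partial_{s_k},\qquad Z\mapsto i(\lambda+\mu).
\end{equation}
So we need new coordinates $s=u+v$ and $w=w(u,v)$ with two properties. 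First, $\lambda\partial_u+\mu\partial_v$ must become $(\lambda+\mu)\partial_s$. Second, this operator must annihilate $w$.

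Take $w=\mu u-\lambda v$, or a rescaled version. Then $(\lambda\nabla_u+\mu\nabla_v)(\mu u-\lambda v)=0$, while $(\lambda\nabla_u+\mu\nabla_v)s=(\lambda+\mu)\,\mathrm{Id}$. By the chain rule, for $F(u,v)=G(s,w)$ we get
\begin{equation}
(\lambda\partial_{u_k}+\mu\partial_{v_k})F=(\lambda+\mu)\,\partial_{s_k}G.
\end{equation}
Multiplication by $i(u_k+v_k)$ becomes multiplication by $is_k$. The linear map $(u,v)\mapsto(s,w)$ is blockwise $\begin{pmatrix}1&1\\ \mu&-\lambda\end{pmatrix}\otimes I_n$. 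Its determinant is $(-\lambda-\mu)^n$, which is nonzero precisely because $\lambda+\mu\neq0$. This is the only place the hypothesis enters.

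Define $(UF)(s,w)=|\lambda+\mu|^{-n/2}F(u(s,w),v(s,w))$ using the inverse linear map. The Jacobian normalization makes $U$ unitary on $L^2(\mathbb R^{2n})$. Since $U$ is an invertible linear substitution, it maps $\mathcal S(\mathbb R^{2n})$ onto itself. The computation above gives $U\,(d\pi_\lambda\otimes d\pi_\mu)(X)=(d\pi_{\lambda+\mu}\otimes\mathbbm 1)(X)\,U$ on basis elements, hence for all $X$ by linearity. Identifying $\mathcal S(\mathbb R^{2n})$ with the (completed) tensor product $\mathcal S(\mathbb R^n_s)\otimes\mathcal S(\mathbb R^n_w)$ yields the claimed equivalence. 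Infinite multiplicity follows by choosing an orthonormal basis of $L^2(\mathbb R^n_w)$, for instance Hermite functions: each $G_j(s)h_j(w)$ sector is invariant and carries a copy of $d\pi_{\lambda+\mu}$.

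The main obstacle is bookkeeping rather than the idea. One must identify the algebraic tensor product with its completion $\mathcal S(\mathbb R^{2n})$, since a linear substitution does not preserve pure tensors, and this identification has to be stated precisely. One must also check that the $w$-factor carries the zero action, which is immediate from the chain rule computation. A cleaner alternative is to rescale $w$ to $w=(\mu u-\lambda v)/(\lambda+\mu)$, so that the inverse map is simply $u=(\lambda s+w)/(\lambda+\mu)$, $v=(\mu s-w)/(\lambda+\mu)$; the constant in $U$ is then adjusted accordingly.
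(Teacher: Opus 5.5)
Your proof is correct and is essentially the paper's: with the unrescaled $w=\mu u-\lambda v$, your operator (inverse map $u=(\lambda s+w)/(\lambda+\mu)$, $v=(\mu s-w)/(\lambda+\mu)$, constant $|\lambda+\mu|^{-n/2}$) becomes exactly the paper's $\mathcal U_{\lambda,\mu}$ once you rename $(s,w)\to(u,v)$, and the intertwining is verified the same way, via $u+v=s$ and the chain rule. One small slip is in your closing remark: those inverse formulas belong to the unrescaled $w$, whereas for $w=(\mu u-\lambda v)/(\lambda+\mu)$ the inverse is $u=\tfrac{\lambda s}{\lambda+\mu}+w$, $v=\tfrac{\mu s}{\lambda+\mu}-w$; that map has $|\det|=1$, so the normalization constant becomes $1$.
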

\begin{proof}
As described above,
\begin{equation}
\begin{aligned}
(d\pi_\lambda(Q)f)(u)
&=  i\, (Q\cdot u)\, f(u), \\
(d\pi_\lambda(P)f)(u)
&= \lambda\, (P\cdot \nabla) f(u), \\
(d\pi_\lambda(Z)f)(u)
&=  i\,\lambda\, f(u).
\end{aligned}
\end{equation}
Then, the tensor product representation on  
\begin{equation}
\mathcal S(\mathbb R^n)\otimes\mathcal S(\mathbb R^n)
\;\cong\;
\mathcal S(\mathbb R^{2n}),
\end{equation}
with $f, g\in \mathcal{S}(\mathbb R^n)$ and $(u,v)\in \mathbb R^n \times \mathbb R^n$ satisfies
\begin{equation}
\begin{aligned}
\bigl((d\pi_\lambda\otimes d\pi_\mu)(Q) (f \otimes g)\bigr)(u,v)
&= i\,(Q \cdot (u+v))\, (f \otimes g)(u,v),\\
\bigl((d\pi_\lambda\otimes d\pi_\mu)(P)(f \otimes g)\bigr)(u,v)
&=\left( \lambda \, P\cdot \nabla_u + \mu\, P\cdot \nabla_v \right)(f \otimes g)(u,v),
\end{aligned}
\end{equation}
and
\begin{equation}
\bigl((d\pi_\lambda\otimes d\pi_\mu)(Z)(f \otimes g)\bigr)(u,v)
= i(\lambda+\mu)\, (f \otimes g)(u,v).
\end{equation}
Define a unitary operator
\begin{equation}
\mathcal U_{\lambda,\mu} : L^2(\mathbb R^{n} \times \mathbb R^{n}) \longrightarrow L^2(\mathbb  R^{n} \times \mathbb R^{n})
\end{equation}
by
\begin{equation}
\begin{aligned}
(\mathcal U_{\lambda,\mu} (f \otimes g))(u,v)
 &=
|\lambda+\mu|^{-n/2}\,
f \left(\frac{\lambda\, u+v}{\lambda+\mu}\,\right) \, g\left(\frac{\mu\, u-v}{\lambda+\mu}\right)\\
 &=
|\lambda+\mu|^{-n/2}\,
(f\otimes g) \!\left(\frac{\lambda\, u+v}{\lambda+\mu}\,, \, \frac{\mu\, u-v}{\lambda+\mu}\right).
\end{aligned}
\end{equation}
Since the linear change of variables
\begin{equation}
(u,v)\longmapsto\left(\frac{\lambda u+v}{\lambda+\mu},\frac{\mu u-v}{\lambda+\mu}\right)
\end{equation}
has Jacobian determinant of absolute value $|\lambda+\mu|^{-n}$, the normalization
factor $|\lambda+\mu|^{-n/2}$ ensures that $\mathcal U_{\lambda,\mu}$ is unitary on
$L^2(\mathbb R^{2n})$.

Since $(u,v)\mapsto (U,V)$ is an invertible linear change of variables, composition
with this map preserves the Schwartz space. Multiplication by the constant
$|\lambda+\mu|^{-n/2}$ clearly does as well. Hence $\mathcal U_{\lambda,\mu}$
preserves $\mathcal S(\mathbb R^{n})\otimes\mathcal S(\mathbb R^{n})$.
Moreover, it intertwines
$d\pi_\lambda\otimes d\pi_\mu$ with $d\pi_{\lambda+\mu}\otimes\mathbbm 1$.
 That is,
\begin{equation}
\mathcal U_{\lambda,\mu}\circ(d\pi_\lambda\otimes d\pi_\mu)(X)
=
(d\pi_{\lambda+\mu}\otimes \mathbbm{1})(X)\circ \mathcal U_{\lambda,\mu}
\quad\text{for all}\quad X\in\mathfrak{hw}_n,
\end{equation}
and we verify this for each of $Q\in \mathfrak{q}, \;P\in \mathfrak p$ and $Z$. 
We first define 
\begin{equation}
U\coloneqq  \frac{\lambda \, u +v}{(\lambda+\mu)}\quad \text{and}
\quad V\coloneqq \frac{\mu \, u -v}{(\lambda+\mu)},
\end{equation}
so that
\begin{equation}
\begin{aligned}
    (\mathcal U_{\lambda,\mu} (f \otimes g))(u,v)
& =
|\lambda+\mu|^{-n/2}\, f(U) \,g(V)\\
& =|\lambda+\mu|^{-n/2}\, (f\otimes g) (U, V).
\end{aligned}
\end{equation}
Note that 
\begin{equation}(d \pi_{\lambda+\mu}\otimes\mathbbm 1) (X) =
d \pi_{\lambda+\mu}(X) \otimes I,\; \text{for all} \; X\in \mathfrak{hw}_n.
\end{equation}

Now, let $Q\in \mathfrak{q}$, then
\begin{equation}
\begin{aligned}
& \, \mathcal{U}_{\lambda, \mu}((d\pi_\lambda \otimes d \pi_\mu)(Q)(f\otimes g))(u,v) \\
=&\,|\lambda+\mu|^{-n/2}\, (d\pi_\lambda \otimes d \pi_\mu)(Q)(f\otimes g) (U, V)\\
=&\,  i \,|\lambda+\mu|^{-n/2}\, \,(Q \cdot ( U+V)) (f\otimes g)(U, V) \\
=&\,  i \, |\lambda+\mu|^{-n/2}  \,\left(Q \cdot  u \right) (f\otimes g)(U, V) \\
= &\,  i \, (Q\cdot u) ( \mathcal{U}_{\lambda, \mu} (f\otimes g))(u,v)\\
= &\,(d \pi_{\lambda+\mu}(Q) \otimes I)( \mathcal{U}_{\lambda, \mu} (f\otimes g))(u,v)\\
= &\,(d \pi_{\lambda+\mu} \otimes \mathbbm{1} )(Q)( \mathcal{U}_{\lambda, \mu} (f\otimes g))(u,v)\, ,
\end{aligned}
\end{equation}
where $U+V=u$ has been used.
Next, let  $P\in\mathfrak p$, then
\begin{equation}
\begin{aligned}
&(d \pi_{\lambda+\mu} \otimes I )(P)\, (\mathcal{U}_{\lambda, \mu} (f\otimes g))(u,v) \\ 
=\, &(d \pi_{\lambda+\mu}(P) \otimes I )\, (\mathcal{U}_{\lambda, \mu} (f\otimes g))(u,v) \\ 
=\, & (\lambda+\mu)\, (P \cdot \nabla_u)\bigl(\mathcal{U}_{\lambda, \mu} (f\otimes g)\bigr)(u,v) \\
=\,&(\lambda+\mu)\,|\lambda+\mu|^{-n/2}\,(P \cdot \nabla_u)(f\otimes g)(U, V) \\
=\,&(\lambda+\mu)\,|\lambda+\mu|^{-n/2}
\left(
((P\cdot\nabla f) \otimes g) \frac{\partial U }{\partial u}
+
(f \otimes (P\cdot\nabla g))\frac{\partial V}{\partial u}
\right) (U, V) \\
=\,&(\lambda+\mu)\,|\lambda+\mu|^{-n/2}
\left(
\frac{\lambda}{\lambda+\mu}\,((P\cdot\nabla f) \otimes g)
+
\frac{\mu}{\lambda+\mu}\,(f \otimes (P\cdot\nabla g))
\right) (U, V) \\
=\,&|\lambda+\mu|^{-n/2}
\left(
\lambda \,((P\cdot\nabla f) \otimes g)
+
\mu \,(f \otimes (P\cdot\nabla g))
\right) (U, V) \\
=\,&|\lambda+\mu|^{-n/2}\,(d\pi_\lambda(P) \otimes I + I \otimes d \pi_\mu(P) )(f\otimes g)(U, V)\\
=\,&|\lambda+\mu|^{-n/2}\,(d\pi_\lambda \otimes d \pi_\mu)(P)\,(f\otimes g)(U, V)\\
=\,& \mathcal{U}_{\lambda, \mu}\bigl((d\pi_\lambda \otimes d \pi_\mu)(P) (f\otimes g)\bigr)(u, v).
\end{aligned}
\end{equation}

Finally, for the central element $Z$ we have
\begin{equation}
\begin{aligned}
(d \pi_{\lambda+\mu} \otimes \mathbbm{1} )(Z)( \mathcal{U}_{\lambda, \mu} (f\otimes g))(u,v) &= \,  i (\lambda+\mu) \, \mathcal{U}_{\lambda, \mu} (f\otimes g)(u,v)\\
&= \,  \mathcal{U}_{\lambda, \mu} ( i (\lambda+\mu)(f\otimes g))(u,v)\\
&= \, \mathcal{U}_{\lambda, \mu}  (d\pi_\lambda (Z)\otimes I + I \otimes d \pi_{\mu}(Z))(f\otimes g)(u,v)\\
&= \, \mathcal{U}_{\lambda, \mu} ( (d\pi_\lambda \otimes d \pi_\mu)(Z)(f\otimes g))(u,v).
\end{aligned}
\end{equation}
Conjugation by $\mathcal U_{\lambda,\mu}$ gives
\begin{equation}
\begin{aligned}
\mathcal U_{\lambda,\mu}\,(d\pi_\lambda\otimes d\pi_\mu)(Q)\,\mathcal U_{\lambda,\mu}^{-1}
&= (d\pi_{\lambda+\mu}(Q)\otimes I)
= (d\pi_{\lambda+\mu}\otimes\mathbbm 1)(Q),
\qquad Q\in\mathfrak q,\\[4pt]
\mathcal U_{\lambda,\mu}\,(d\pi_\lambda\otimes d\pi_\mu)(P)\,\mathcal U_{\lambda,\mu}^{-1}
&= (d\pi_{\lambda+\mu}(P)\otimes I)
= (d\pi_{\lambda+\mu}\otimes\mathbbm 1)(P),
\qquad P\in\mathfrak p,\\[4pt]
\mathcal U_{\lambda,\mu}\,(d\pi_\lambda\otimes d\pi_\mu)(Z)\,\mathcal U_{\lambda,\mu}^{-1}
&= (d\pi_{\lambda+\mu}(Z)\otimes I)
= (d\pi_{\lambda+\mu}\otimes\mathbbm 1)(Z).
\end{aligned}
\end{equation}
Hence,
\begin{equation}
d\pi_\lambda \otimes d\pi_\mu \;\cong\; d\pi_{\lambda+\mu}\otimes \mathbbm 1,
\end{equation}
that is, $d\pi_\lambda \otimes d\pi_\mu$ is equivalent to
$d\pi_{\lambda+\mu}$ with infinite multiplicity.
\end{proof}

\begin{remark}[Non-irreducibility of the tensor product]
Although $d\pi_{\lambda+\mu}$ is irreducible, the tensor product
\begin{equation}
d\pi_\lambda\otimes d\pi_\mu \cong d\pi_{\lambda+\mu}\otimes \mathbbm 1
\end{equation}
is \emph{not} irreducible as a representation of $\mathfrak{hw}_n$, since
$\mathbbm 1$ is infinite--dimensional and any subspace
$W\subset \mathcal S(\mathbb R^n)$ yields a proper $\mathfrak{hw}_n$--submodule
$\mathcal S(\mathbb R^n)\otimes W$.
\end{remark}

\begin{theorem}\label{thm:zerocase}
Let $\lambda \in \mathbb R\setminus\{0\}$.  
Then the tensor product representation
$d\pi_\lambda \otimes d\pi_{-\lambda}$ of the Heisenberg--Weyl Lie algebra
$\mathfrak{hw}_n$ has trivial central character and hence factors through the
abelian quotient
\begin{equation}
\mathfrak{hw}_n / \mathbb R Z \;\cong\; \mathbb R^{2n}.
\end{equation}
More precisely, there exists a unitary intertwining operator identifying
$d\pi_\lambda \otimes d\pi_{-\lambda}$ with a tensor product representation
\begin{equation}
d\pi_\lambda \otimes d\pi_{-\lambda}
\;\cong\;
\sigma_U \otimes \tau_V
\quad\text{on}\quad
L^2(\mathbb R^n_U)\otimes L^2(\mathbb R^n_V),
\end{equation}
where $\sigma_U$ and $\tau_V$ are representations with commuting images of the
abelian Lie algebra $\mathfrak{hw}_n/\mathbb R Z \cong \mathbb R^{2n}$, realized in
the proof by multiplication and differentiation operators, respectively.
\end{theorem}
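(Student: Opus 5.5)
We follow the template of the proof of Theorem~\ref{thm:nonzerocase}: write down the tensor product action explicitly, then find a linear change of variables that diagonalizes it. On $\mathcal S(\mathbb R^n)\otimes\mathcal S(\mathbb R^n)\cong\mathcal S(\mathbb R^{2n})$, with $\mu=-\lambda$, the action is
\begin{equation}
\begin{aligned}
(d\pi_\lambda\otimes d\pi_{-\lambda})(Q)F(u,v)&=i\,(Q\cdot(u+v))\,F(u,v),\\
(d\pi_\lambda\otimes d\pi_{-\lambda})(P)F(u,v)&=\lambda\,(P\cdot(\nabla_u-\nabla_v))F(u,v),\\
(d\pi_\lambda\otimes d\pi_{-\lambda})(Z)F(u,v)&=i(\lambda-\lambda)F=0.
\end{aligned}
\end{equation}
The last line gives the trivial central character immediately. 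Hence the representation factors through $\mathfrak{hw}_n/\mathbb RZ\cong\mathbb R^{2n}$. Consistently with this, the operators $i\,Q\cdot(u+v)$ and $\lambda P\cdot(\nabla_u-\nabla_v)$ commute, since $(\nabla_u-\nabla_v)(u+v)=0$.

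The scaling $1/(\lambda+\mu)$ used before is no longer available, so we use the unscaled map
\begin{equation}
U=u+v,\qquad V=\tfrac12(u-v),
\end{equation}
whose inverse is $u=\tfrac12U+V$, $v=\tfrac12U-V$. Define
\begin{equation}
(\mathcal W F)(U,V)=F\!\left(\tfrac12U+V,\tfrac12U-V\right).
\end{equation}
The inverse linear map has Jacobian determinant of absolute value $1$, because $\det\begin{pmatrix}1/2&1\\1/2&-1\end{pmatrix}=-1$ blockwise gives $|{-1}|^n=1$. So $\mathcal W$ is unitary on $L^2(\mathbb R^{2n})$ with no normalization factor. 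Being an invertible linear substitution, it preserves $\mathcal S(\mathbb R^{2n})$, exactly as argued in the proof of Theorem~\ref{thm:nonzerocase}.

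Next we check the intertwining relations by the chain rule. Multiplication by $u+v$ becomes multiplication by $U$. For the derivatives, $\partial U/\partial u=\partial U/\partial v=1$ and $\partial V/\partial u=-\partial V/\partial v=\tfrac12$, so $\nabla_u-\nabla_v$ becomes $\nabla_V$. Thus
\begin{equation}
\mathcal W(d\pi_\lambda\otimes d\pi_{-\lambda})(Q)\mathcal W^{-1}=i\,(Q\cdot U)\otimes I,\qquad
\mathcal W(d\pi_\lambda\otimes d\pi_{-\lambda})(P)\mathcal W^{-1}=I\otimes\lambda\,(P\cdot\nabla_V),
\end{equation}
and $Z\mapsto0$. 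We therefore define $\sigma_U$ on $L^2(\mathbb R^n_U)$ by $\sigma_U(Q)=i\,Q\cdot U$, $\sigma_U(P)=0$, and $\tau_V$ on $L^2(\mathbb R^n_V)$ by $\tau_V(P)=\lambda\,P\cdot\nabla_V$, $\tau_V(Q)=0$. Both vanish on $Z$, so they descend to representations of the abelian quotient, and their images act on different tensor factors and hence commute. The representation $\sigma_U\otimes\tau_V$ on the external tensor product is then $X\mapsto\sigma_U(X)\otimes I+I\otimes\tau_V(X)$, which matches the conjugated operators above. This establishes the claimed unitary equivalence.

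The main obstacle is only bookkeeping: verifying the chain-rule computation for $P$ with the correct signs, and making precise the sense in which ``$\sigma_U\otimes\tau_V$'' is the sum action on the tensor product. Optionally, applying a partial Fourier transform in $V$ turns $\tau_V$ into a multiplication operator and exhibits the direct integral of characters from the remark on vanishing central character. This step is not needed for the statement.
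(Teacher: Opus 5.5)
Your proof is correct and is essentially the paper's argument. Both use a linear, measure-preserving change to sum/difference coordinates that separates the multiplication part $Q\cdot(u+v)$ from the derivative part $P\cdot(\nabla_u-\nabla_v)$.

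The only difference is normalization. The paper uses the orthogonal rotation $U=(u+v)/\sqrt2$, $V=(u-v)/\sqrt2$, which gives $\sigma_U(Q)=\sqrt2\,i\,M_Q$ and $\tau_V(P)=\sqrt2\,\lambda\,P\cdot\nabla$. Your unimodular but non-orthogonal choice $U=u+v$, $V=\tfrac12(u-v)$ removes those factors of $\sqrt2$.
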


\begin{proof}
Since
\begin{equation}
\begin{aligned}
  & ((d\pi_\lambda\otimes d\pi_{-\lambda})(Z)(f\otimes g))(u,v) \\
  =\;& ((d\pi_\lambda(Z) \otimes I + I\otimes d\pi_{-\lambda}(Z))(f\otimes g))(u,v)\\
  =\;& ( i\,\lambda -  i\,\lambda)\, f(u)g(v)\\
  =\;& 0,
\end{aligned}
\end{equation}
the center $\mathfrak z (\mathfrak{hw}_n)=\mathbb R Z$ acts trivially, and the representation
factors through the abelian quotient
$\mathfrak{hw}_n/\mathbb R Z \cong \mathbb R^{2n}$.

Define a unitary operator
\begin{equation}
\mathcal U:L^2(\mathbb R^{n} \times \mathbb R^{n})
\longrightarrow
L^2(\mathbb R^{n} \times \mathbb R^{n})
\end{equation}
by
\begin{equation}
(\mathcal U (f\otimes g))(u,v)
=
(f\otimes g)\!\left(\tfrac{u+v}{\sqrt2},\,\tfrac{u-v}{\sqrt2}\right).
\end{equation}
The change of variables
\begin{equation}
(u,v)\mapsto\left(\tfrac{u+v}{\sqrt2},\tfrac{u-v}{\sqrt2}\right)
\end{equation}
is orthogonal, hence has Jacobian of absolute value $1$, so $\mathcal U$ is unitary on $L^2(\mathbb R^{2n})$.
Moreover, $\mathcal U$ preserves $\mathcal S(\mathbb R^n)\otimes\mathcal S(\mathbb R^n)$.
Define
\begin{equation}\label{UVdef}
U\coloneqq \frac{u+v}{\sqrt2},
\qquad
V\coloneqq \frac{u-v}{\sqrt2},
\end{equation}
so that
\begin{equation}
(\mathcal U (f\otimes g))(u,v)
=
(f\otimes g)(U,V).
\end{equation}

Let $Q\in \mathfrak q$, and define $(M_Q f)(U)\coloneqq(Q\cdot U)f(U)$. Then
\begin{equation}
\begin{aligned}
(d\pi_\lambda \otimes d \pi_{-\lambda})(Q)
(\mathcal{U} (f\otimes g))(u,v)
&=
 i (Q\cdot (u+v)) (f\otimes g)(U,V) \\
&=
\sqrt2\, i (Q\cdot U) (f\otimes g)(U,V) \\
&=
\sqrt2\, i ((Q\cdot U)f(U)) g(V) \\
&=
\sqrt2\, i ((M_Qf)(U)) g(V) \\
&=
\sqrt2\, i ((M_Qf) \otimes g) (U, V) \\
&=
\mathcal{U}
\bigl(\sqrt2\, i\, (M_Q f \otimes g)\bigr)(u,v).
\end{aligned}
\end{equation}


Next, for $P\in\mathfrak p$, 
\begin{equation}
\begin{aligned}
(d\pi_\lambda \otimes d\pi_{-\lambda})(P)
(\mathcal U(f\otimes g))(u,v)
&=
\bigl(\lambda\, P\cdot \nabla_u - \lambda\, P\cdot \nabla_v\bigr)
(f \otimes g)(U, V) \\
&= \sqrt2\,\lambda\,
(P\cdot \nabla_V)\bigl((f \otimes g)(U, V)\bigr) \\
&= \sqrt2\,\lambda\, f(U)\,(P\cdot\nabla_V g)(V) \\
&= \sqrt2\,\lambda\,
(f\otimes (P\cdot\nabla g))(U,V) \\
&=
\mathcal U
\bigl(\sqrt2\,\lambda\, (f \otimes (P\cdot \nabla g))\bigr)(u,v).
\end{aligned}
\end{equation}

Conjugation by $\mathcal U$ yields
\begin{equation}
\begin{aligned}
\mathcal U^{-1}\,(d\pi_\lambda\otimes d\pi_{-\lambda})(Q)\,\mathcal U
&= (\sqrt2\, i\, M_Q)\otimes I, \\
\mathcal U^{-1}\,(d\pi_\lambda\otimes d\pi_{-\lambda})(P)\,\mathcal U
&= I\otimes(\sqrt2\,\lambda\, P\cdot\nabla),
\end{aligned}
\end{equation}
and the central element $Z$ acts trivially.
Thus
\begin{equation}
d\pi_\lambda \otimes d\pi_{-\lambda}
\;\cong\;
\sigma_U \otimes \tau_V,
\end{equation}
where $\sigma_U$ and $\tau_V$ are representations of the abelian Lie algebra
$\mathfrak{hw}_n/\mathbb R Z$ defined by
\begin{equation}
\sigma_U(Q)=\sqrt2\, i\, M_Q,
\qquad
\sigma_U(P)=0,
\qquad
\tau_V(Q)=0,
\qquad
\tau_V(P)=\sqrt2\,\lambda\, P\cdot\nabla,
\end{equation}
with $Z$ acting trivially in both representations.
\end{proof}

\begin{remark}[Outside the scope of the Stone--von Neumann framework]
Conceptually, after passing to the abelian quotient
$\mathfrak{hw}_n/\mathbb R Z \cong \mathbb R^{2n}$, the tensor product
exhibits the decomposition
\begin{equation}
d\pi_\lambda \otimes d\pi_{-\lambda}
\;\cong\;
(\text{multiplication}) \otimes (\text{differentiation}),
\end{equation}
reflecting the fact that the Stone--von Neumann classification no longer
applies when the central character vanishes.
At the group level, the corresponding unitary representation therefore
factors through the abelian group $\mathbb R^{2n}$ and, by the spectral
theorem for unitary representations of locally compact abelian groups,
decomposes schematically as
\begin{equation}
\pi_\lambda \otimes \pi_{-\lambda}
\;\cong\;
\int^\oplus \chi \, d\mu(\chi),
\end{equation}
that is,
as a direct integral of one--dimensional unitary characters
\cite[Theorem~7.28]{folland-aha}.
\end{remark}

\section{Nonunitary Representations via Symplectic Embedding}

In this section, we construct a large family of finite--dimensional, nonunitary complex
indecomposable representations of $\mathfrak{hw}_n$.
Specifically, we identify a natural subalgebra of $\mathfrak{sp}_{2n+2}(\mathbb R)$
that is isomorphic to $\mathfrak{hw}_n$, and show that every finite-dimensional
complex irreducible representation of $\mathfrak{sp}_{2n+2}(\mathbb R)$ remains
indecomposable when restricted to $\mathfrak{hw}_n$.

We first review relevant background on closed subsets of root systems and regular
subalgebras. In addition, we examine the real and complex symplectic algebras together with 
corresponding natural subalgebras isomorphic to  the Heisenberg--Weyl Lie algebra and its
complexification, respectively.

\subsection{Closed Subsets of Root Systems and Regular Subalgebras}

This subsection largely follows the treatment from \cite{DouglasRepka2025}.
Let $\mathfrak{g}^\mathbb{C}$ denote a complex semisimple Lie algebra, and let
$\mathfrak{h}^\mathbb{C}$ be a fixed Cartan subalgebra of $\mathfrak{g}^\mathbb{C}$. Denote by $\Phi$ the corresponding root system of
$(\mathfrak g^{\mathbb C},\mathfrak h^{\mathbb C})$.
The set of positive roots is denoted $\Phi^+$. 
For $\alpha\in\Phi$, we write $\mathfrak g_\alpha$ for the root space
$\mathfrak{g}_{\alpha}^{\mathbb{C}} \subset \mathfrak g^{\mathbb C}$.

A subset $T\subset\Phi$ is said to be \emph{closed} if for any
$\alpha,\; \beta\in T$, the condition $\alpha+\beta\in\Phi$ implies
$\alpha+\beta\in T$.
Any closed subset $T$ decomposes as a disjoint union
\begin{equation}
T = T^r \sqcup T^u,
\end{equation}
where
\begin{equation}
T^r = \{\alpha\in T \mid -\alpha\in T\},
\qquad
T^u = \{\alpha\in T \mid -\alpha\notin T\},
\end{equation}
are called the \emph{symmetric} and \emph{special} components of $T$,
respectively.

A \emph{regular subalgebra} of a complex semisimple Lie algebra is a subalgebra
that is normalized by a Cartan subalgebra.
In particular, let $T\subset\Phi$ be a closed subset, and let
$\mathfrak t\subset\mathfrak h^{\mathbb C}$ be a subspace containing
$[\mathfrak g_\alpha,\mathfrak g_{-\alpha}]$ for each $\alpha\in T^r$.
Then
\begin{equation}\label{reggg}
\mathfrak s_{T,\mathfrak t}
=
\mathfrak t \;\oplus \bigoplus_{\alpha\in T}\mathfrak g_\alpha
\end{equation}
is a regular subalgebra of $\mathfrak g^{\mathbb C}$.
For convenience, if $\mathfrak t=0$, we write $\mathfrak s_T$ in place of
$\mathfrak s_{T,0}$.

Let $S\subseteq\Phi$.
The \emph{closure} of $S$, denoted $[S]$, is the smallest closed subset of
$\Phi$ containing $S$.
The following result is well known and routine to verify.

\begin{lemma}\label{semireg}
Let $T$ be a closed subset of $\Phi$.
Then $[T\cup(-T)]$ is a symmetric closed subset of $\Phi$ containing $T$.
\end{lemma}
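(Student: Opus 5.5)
Three things need to be shown: $[T\cup(-T)]$ is closed, it contains $T$, and it is symmetric. The first two follow at once from the definition of closure as the smallest closed subset of $\Phi$ containing $T\cup(-T)$. It is well defined, since $\Phi$ itself is closed and an intersection of closed subsets is again closed. Hence $T\subseteq T\cup(-T)\subseteq [T\cup(-T)]$. Closedness of $T$ plays no role here; it only matters for how the lemma is applied later, when $T$ is viewed as sitting inside the symmetric closed set $[T\cup(-T)]$.

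The only substantive point is symmetry, i.e.\ that $\alpha\in[T\cup(-T)]$ implies $-\alpha\in[T\cup(-T)]$. I would use the fact that negation $\alpha\mapsto-\alpha$ is a bijection of $\Phi$ preserving the relation ``$\alpha+\beta\in\Phi$''. Indeed, $(-\alpha)+(-\beta)=-(\alpha+\beta)$, and $\Phi=-\Phi$. Consequently, for any closed $C\subseteq\Phi$, the set $-C$ is also closed. For $-\alpha,-\beta\in -C$ with $(-\alpha)+(-\beta)\in\Phi$, we get $\alpha+\beta\in\Phi$, hence $\alpha+\beta\in C$ and so $-(\alpha+\beta)\in -C$.

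Now set $S=T\cup(-T)$, which satisfies $-S=S$, and $C=[S]$. Then $-C$ is closed and contains $-S=S$, so by minimality $C\subseteq -C$. Applying negation gives $-C\subseteq C$, hence $C=-C$. This means every $\alpha\in C$ has $-\alpha\in C$, so $C^r=C$ and $C^u=\emptyset$; that is, $C$ is symmetric.

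None of these steps is a real obstacle. The only thing to check carefully is the invariance of closedness under negation, which rests on the root-system axiom $\Phi=-\Phi$. Alternatively, one can describe $[S]$ constructively as the union of the increasing chain $S_0=S$, $S_{k+1}=S_k\cup\{\alpha+\beta\in\Phi:\alpha,\beta\in S_k\}$, which stabilizes because $\Phi$ is finite. An induction on $k$ then shows $S_k=-S_k$ for all $k$, giving the same conclusion.
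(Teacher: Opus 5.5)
Your proof is correct and complete. The paper gives no proof, only remarking that the lemma is well known and routine to verify, and your minimality argument is exactly that routine check: since $\Phi=-\Phi$, negation sends closed sets to closed sets, so $-[S]$ is a closed set containing $-S=S$, which forces $[S]=-[S]$. Your observation that closedness of $T$ is never used is also accurate.
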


We end this subsection with a theorem giving necessary and sufficient conditions for finite--dimensional complex irreducible representation of $\mathfrak{g}^\mathbb{C}$ to remain indecomposbale when restricted to a regular subalgebra $\mathfrak{s}_{T,\mathfrak{t}}$.
It is one of the results of Douglas and Repka in \cite{DouglasRepka2025}, and generalizes earlier work of Panyushev \cite{panyushev}.

\begin{theorem}\cite[Corollary~3.10]{DouglasRepka2025}\label{lwide}
Let $\mathfrak g^{\mathbb C}$ be a complex semisimple Lie algebra with Cartan
subalgebra $\mathfrak h^{\mathbb C}$ and root system $\Phi$.
Let $T\subset\Phi$ be a closed subset, and let
$\mathfrak t\subset\mathfrak h^{\mathbb C}$ be a subalgebra containing
$[\mathfrak g_\alpha,\mathfrak g_{-\alpha}]$ for each $\alpha\in T^r$.
Set
\begin{equation}
\mathfrak s_{T,\mathfrak t}
=
\mathfrak t \;\oplus \bigoplus_{\alpha\in T}\mathfrak g_\alpha.
\end{equation}
Then, every finite--dimensional complex irreducible representation of
$\mathfrak g^{\mathbb C}$ remains indecomposable when restricted to
$\mathfrak s_{T,\mathfrak t}$ if and only if
\begin{equation}
[T\cup (-T)] = \Phi.
\end{equation}
\end{theorem}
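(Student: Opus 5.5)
The proof splits into the two implications. Necessity is a short construction with the adjoint representation. Sufficiency takes two reductions, first to $\mathfrak t=\mathfrak h^{\mathbb C}$ and then to a combinatorial statement about weights.

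\emph{Necessity.} Suppose $\Psi:=[T\cup(-T)]\neq\Phi$. By Lemma~\ref{semireg}, $\Psi$ is a symmetric closed subset containing $T$. Hence $\mathfrak s_{T,\mathfrak t}\subseteq \mathfrak l:=\mathfrak h^{\mathbb C}\oplus\bigoplus_{\alpha\in\Psi}\mathfrak g_\alpha$, and it suffices to exhibit an irreducible $V$ that decomposes under $\mathfrak l$.

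\begin{itemize}
\item \emph{Closure property.} If $\alpha\in\Psi$, $\beta\in\Phi\setminus\Psi$ and $\alpha+\beta\in\Phi$, then $\alpha+\beta\notin\Psi$. Otherwise $\beta=(\alpha+\beta)+(-\alpha)$ would lie in $\Psi$, since $\Psi$ is symmetric and closed.
\item \emph{The module.} Choose a simple ideal $\mathfrak g_1$ of $\mathfrak g^{\mathbb C}$ whose root system $\Phi_1$ is not contained in $\Psi$, and let $V=\mathfrak g_1$ be its adjoint representation, which is irreducible for $\mathfrak g^{\mathbb C}$.
\item \emph{The splitting.} By the closure property, $V=\bigl((\mathfrak h^{\mathbb C}\cap\mathfrak g_1)\oplus\bigoplus_{\alpha\in\Psi\cap\Phi_1}\mathfrak g_\alpha\bigr)\oplus\bigoplus_{\beta\in\Phi_1\setminus\Psi}\mathfrak g_\beta$ is a direct sum of two $\mathfrak l$-submodules. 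The second summand is nonzero by the choice of $\mathfrak g_1$.
\item \emph{The first summand is nonzero.} It contains $\mathfrak h^{\mathbb C}\cap\mathfrak g_1\neq0$.
\end{itemize}
So this $V$ is decomposable over $\mathfrak s_{T,\mathfrak t}$.

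\emph{Sufficiency, step 1: reduce to $\mathfrak t=\mathfrak h^{\mathbb C}$.} Let $V$ be irreducible and set $A=\operatorname{End}_{\mathfrak s}(V)$ with $\mathfrak s=\mathfrak s_{T,\mathfrak t}$. Indecomposability of $V$ means that $A$ is local. Since $[\mathfrak h^{\mathbb C},\mathfrak s]\subseteq\mathfrak s$, the Jacobi identity shows that $A$ is stable under $\operatorname{ad}H$ for $H\in\mathfrak h^{\mathbb C}$. Thus $A$ is graded by the (finitely many) differences of weights of $V$, and its degree-$0$ part is $A_0=\operatorname{End}_{\mathfrak s_{T,\mathfrak h^{\mathbb C}}}(V)$.

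Compose the weights with a generic linear functional to obtain a $\mathbb Z$-grading on $A$. The components of nonzero degree are nilpotent. For a finite-dimensional $\mathbb Z$-graded algebra, every idempotent is conjugate to a homogeneous one of degree $0$ (the Gordon--Green lifting argument), so $A$ is local if and only if $A_0$ is. It therefore suffices to treat $\mathfrak s'=\mathfrak s_{T,\mathfrak h^{\mathbb C}}$. For $\mathfrak s'$, any decomposition $V=V_1\oplus V_2$ is into $\mathfrak h^{\mathbb C}$-stable subspaces, that is, into sums of weight spaces.

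\emph{Sufficiency, step 2: the weight-graded case, which is the main obstacle.} Let $V=V_1\oplus V_2$ be an $\mathfrak s'$-decomposition into sums of weight spaces. The one-dimensional highest weight space lies in, say, $V_1$, and we must show $V_2=0$.

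\begin{itemize}
\item Let $E$ be the projection onto $V_1$. The centralizer $\mathfrak c=\{x\in\mathfrak g^{\mathbb C}: [x,E]=0\}$ is a Lie subalgebra containing $\mathfrak h^{\mathbb C}$ and each $\mathfrak g_\alpha$ with $\alpha\in T$. Being normalized by $\mathfrak h^{\mathbb C}$, it equals $\mathfrak h^{\mathbb C}\oplus\bigoplus_{\alpha\in C}\mathfrak g_\alpha$ for some closed set $C\supseteq T$.
\item If $C$ were symmetric, then $C\supseteq[T\cup(-T)]=\Phi$. So $\mathfrak c=\mathfrak g^{\mathbb C}$, and irreducibility of $V$ would force $E\in\{0,1\}$.
\item The hard point is to show that $-\alpha\in C$ for each $\alpha\in T^u$. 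My first attempt is $\mathfrak{sl}_2$-theoretic. For $\alpha\in T^u$, $E$ commutes with $h_\alpha$ and $e_\alpha$ and preserves weight spaces. Decompose $V$ into $\alpha$-strings and use the identity $f_\alpha e_\alpha^{k}v=e_\alpha^{k-1}(\text{scalar})v$, valid on $\mathfrak{sl}_2$-isotypic pieces, to show that $E$ preserves the $\mathfrak{sl}_2$-isotypic decomposition. If this fails in general, I would fall back on Panyushev's original argument. That argument propagates membership in $V_1$ from the highest weight vector along root strings, using that every root is reachable from $\pm T$ by closure.
\end{itemize}
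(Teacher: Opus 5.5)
The paper gives no proof of this statement; it is quoted from \cite{DouglasRepka2025}, so your proposal has to stand on its own.

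\textbf{What works.} Your necessity argument is correct and complete: the adjoint representation of a simple ideal $\mathfrak g_1$ with $\Phi_1\not\subseteq\Psi$ splits along $\Psi\cap\Phi_1$ versus $\Phi_1\setminus\Psi$. Step~1 is also a valid reduction, given the Gordon--Green theorem: $\operatorname{End}_{\mathfrak s_{T,\mathfrak t}}(V)$ is graded by weight differences, and its degree-zero part is $\operatorname{End}_{\mathfrak s_{T,\mathfrak h^{\mathbb C}}}(V)$.

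\textbf{The gap.} It sits exactly where you flag the main obstacle: you never prove that $E$ commutes with $\mathfrak g_{-\alpha}$ for $\alpha\in T^u$. You announce a strategy, namely to show that $E$ preserves the $\mathfrak{sl}_2$-isotypic decomposition. But you give no reason why an operator commuting only with $h_\alpha$ and $e_\alpha$ should do so, and the identity $f_\alpha e_\alpha^k v=(\text{scalar})\,e_\alpha^{k-1}v$ is usable only after that is known. The fallback to Panyushev's argument points to the literature this statement generalizes; it is not a proof. As written, sufficiency is not established.

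\textbf{The missing lemma.} If $W$ is a finite-dimensional $\mathfrak{sl}_2$-module and $E\in\operatorname{End}(W)$ commutes with $h$ and $e$, then $E$ commutes with $f$. To see this:
\begin{itemize}
\item The irreducible $W(m)$ is generated over $\mathfrak b=\langle h,e\rangle$ by its lowest weight vector $w_{-m}$. So a $\mathfrak b$-map $W(m)\to W(n)$ is determined by the image $x$ of $w_{-m}$, which has weight $-m$ and satisfies $e^{m+1}x=0$.
\item If $n\ge m+2$, then $e^{m+1}$ is injective on the weight $-m$ space of $W(n)$. If $n<m$, or the parities differ, that space is zero. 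Hence $\operatorname{Hom}_{\mathfrak b}(W(m),W(n))=\delta_{mn}\mathbb C$, and so $\operatorname{End}_{\mathfrak b}(W)=\operatorname{End}_{\mathfrak{sl}_2}(W)$.
\end{itemize}
Equivalently, your isotypic idea goes through once you observe that the lowest weight vectors of weight $-m$ are exactly the $x$ with $hx=-mx$ and $e^{m+1}x=0$, a description that $E$ must preserve.

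\textbf{How it closes the argument.} Apply the lemma to $(e_\alpha,h_\alpha,f_\alpha)$ for every $\alpha\in T$; there is no need to separate $T^r$ from $T^u$. Then every element of $\operatorname{End}_{\mathfrak s_{T,\mathfrak h^{\mathbb C}}}(V)$ commutes with $\mathfrak h^{\mathbb C}$ and all $\mathfrak g_{\pm\alpha}$. Hence it commutes with the subalgebra they generate, $\mathfrak h^{\mathbb C}\oplus\bigoplus_{\beta\in[T\cup(-T)]}\mathfrak g_\beta=\mathfrak g^{\mathbb C}$, and by Schur $\operatorname{End}_{\mathfrak s_{T,\mathfrak h^{\mathbb C}}}(V)=\mathbb C$. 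This makes the projection $E$, the closed set $C$ and the highest-weight bookkeeping unnecessary.

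\textbf{Optional simplification of Step~1.} The same fact lets you bypass Gordon--Green. Let $p=\sum_m p_m$ be an idempotent in the $\mathbb Z$-graded algebra $\operatorname{End}_{\mathfrak s_{T,\mathfrak t}}(V)$. Then $p_0\in\mathbb C$. Each $p_mp_{-m}$ with $m\neq0$ is a scalar that must vanish, since otherwise the nilpotent $p_m$ would be invertible. So $p_0=p_0^2\in\{0,1\}$. The $p_m$ with $m\neq0$ shift degrees and are traceless, so $\operatorname{rank}p=\operatorname{tr}_V p=p_0\dim V$, which forces $p\in\{0,1\}$.
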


\subsection{The Real and Complex Symplectic Algebras}

The
\emph{complex symplectic algebra}  is given by
\begin{equation}
\begin{aligned}
\mathfrak{sp}_{2n+2}(\mathbb{C})
&=\{X\in M_{2n+2}(\mathbb{C}) \mid X^{\top}J+JX=0\},
\end{aligned}
\end{equation}
where
\begin{equation}
J=\begin{pmatrix}
0 & I \\
-I & 0
\end{pmatrix}.
\end{equation}
We may write this simple Lie algebra in block form as follows:
\begin{equation}\label{block}
\mathfrak{sp}_{2n+2}(\mathbb{C})
=
\left\{
\begin{pmatrix}
A & B\\
C & -A^{\top}
\end{pmatrix}
\;\middle|\;
A\in M_{n+1}(\mathbb{C}),\;
B^{\top}=B,\;
C^{\top}=C
\right\}.
\end{equation}

Let $\mathfrak h \subset \mathfrak{sp}_{2n+2}(\mathbb C)$ be the diagonal Cartan subalgebra
\begin{equation}
\mathfrak h
=
\left\{
\begin{pmatrix}
T & 0 \\
0 & -T
\end{pmatrix}
\;\middle|\;
T = \mathrm{diag}(t_1,\dots,t_{n+1}), \;\; t_1,\dots t_{n+1} \in \mathbb{C}
\right\}.
\end{equation}
For $1 \le k \le n+1$, define $\varepsilon_k \in \mathfrak h^{*}$ by
\begin{equation}
\varepsilon_k\left(\begin{pmatrix}
T & 0 \\
0 & -T
\end{pmatrix}\right) = t_k.
\end{equation}

The corresponding root system of $\mathfrak{sp}_{2n+2}(\mathbb C)$, which is of type $C_{n+1}$, is then
\begin{equation}
\Phi
=
\{\pm(\varepsilon_k-\varepsilon_\ell),\ \pm(\varepsilon_k+\varepsilon_\ell)
\; |\; 1\le k<\ell\le n+1 \}
\;\cup\;
 \{\pm 2\varepsilon_k \; |\; 1\le k\le n+1 \}.
\end{equation}
A standard choice of simple roots is
\begin{equation}
\alpha_k=\varepsilon_k-\varepsilon_{k+1}\quad (1\le k\le n),
\qquad
\alpha_{n+1}=2\varepsilon_{n+1}.
\end{equation}

With respect to this choice, the corresponding set of positive roots is
\begin{equation}
\Phi^+
=
\{\varepsilon_k-\varepsilon_\ell,\ \varepsilon_k+\varepsilon_\ell
\; |\; 1\le k<\ell\le n+1\}
\;\cup\;
\{2\varepsilon_k \; |\; 1\le k\le n+1 \}.
\end{equation}

We now describe explicit root vectors for each root. Let $E_{k \ell}$ denote the $(n+1) \times (n+1)$ matrix unit with a $1$ in the $(k,\ell)$--entry and zeros
elsewhere.
\vspace{.7em}

\noindent \underline{Roots $\varepsilon_k - \varepsilon_\ell$ ($k \neq \ell$)}: A root vector for the root $\varepsilon_k - \varepsilon_\ell$ is 
\begin{equation}
X_{\varepsilon_k - \varepsilon_\ell}
\coloneqq
\begin{pmatrix}
E_{k \ell} & 0 \\
0 & -E_{\ell k}
\end{pmatrix}.\,
\end{equation}
since $[H, X_{\varepsilon_k - \varepsilon_\ell}]
=
(\varepsilon_k - \varepsilon_\ell)(H)\, X_{\varepsilon_k - \varepsilon_\ell}$, for $H\in \mathfrak{h}$.

\vspace{.7em}

\noindent \underline{Roots $\varepsilon_k + \varepsilon_\ell$ ($k \neq \ell$)}:  A root vector for $\varepsilon_k + \varepsilon_\ell$ is
\begin{equation}
X_{\varepsilon_k + \varepsilon_\ell}
\coloneqq
\begin{pmatrix}
0 & E_{k \ell} + E_{\ell k} \\
0 & 0
\end{pmatrix},
\end{equation}
since $[H, X_{\varepsilon_k + \varepsilon_\ell}]
= (\varepsilon_k + \varepsilon_\ell)(H)\, X_{\varepsilon_k + \varepsilon_\ell}$. 
The corresponding negative root vector is
\begin{equation}
X_{-(\varepsilon_k + \varepsilon_\ell)}
\coloneqq
\begin{pmatrix}
0 & 0 \\
E_{k \ell} + E_{\ell k} & 0
\end{pmatrix}.
\end{equation}
\vspace{.5em}

\noindent \underline{Roots $2\varepsilon_k$}: A root vector for $2\varepsilon_k$ is
\begin{equation}
X_{2\varepsilon_k}
\coloneqq
\begin{pmatrix}
0 & E_{kk} \\
0 & 0
\end{pmatrix},
\end{equation}
since $[H, X_{2\varepsilon_k}]
=
2\varepsilon_k(H)\, X_{2\varepsilon_k}$. 
The corresponding negative root vector is
\begin{equation}
X_{-2\varepsilon_k}
\coloneqq
\begin{pmatrix}
0 & 0 \\
E_{kk} & 0
\end{pmatrix}.
\end{equation}

With the explicit choice of Cartan subalgebra and root vectors above, we obtain the
root space decomposition
\begin{equation}\label{cdecomp}
\begin{aligned}
\mathfrak{sp}_{2n+2}(\mathbb C) &= \mathfrak{h}\oplus \bigoplus_{\alpha \in \Phi} (\mathfrak{sp}_{2n+2}(\mathbb C))_{\alpha}\\
&= \mathfrak{h} \oplus \bigoplus_{\alpha \in \Phi}  \mathbb{C}X_\alpha.
\end{aligned}
\end{equation}

The \emph{real symplectic algebra} $\mathfrak{sp}_{2n+2}(\mathbb{R})$ is the split--real form of $\mathfrak{sp}_{2n+2}(\mathbb{C})$, and is defined by 
\begin{equation}
\begin{aligned}
\mathfrak{sp}_{2n+2}(\mathbb{R})
&=\{X\in M_{2n+2}(\mathbb{R}) \mid X^{\top}J+JX=0\},
\end{aligned}
\end{equation}
with the same block form as for $\mathfrak{sp}_{2n+2}(\mathbb{C})$ in Eq.~\eqref{block}, but with blocks in $M_{n+1}(\mathbb{R})$ instead of $M_{n+1}(\mathbb{C})$.
Analogous to the decomposition in Eq.~\eqref{cdecomp} for $\mathfrak{sp}_{2n+2}(\mathbb{C})$,
we have the decomposition  
\begin{equation}\label{cdecompp}
\mathfrak{sp}_{2n+2}(\mathbb R)
=
\mathfrak{h}^{\mathbb R}
\oplus
\bigoplus_{\alpha \in \Phi} \mathbb{R}X_\alpha,
\end{equation}
 where $\mathfrak{h}^{\mathbb R}=\mathfrak{h}\cap\mathfrak{sp}_{2n+2}(\mathbb R)$.

\subsection{Indecomposable Representations of $\mathfrak{hw}_n$ via Symplectic Embedding}

Consider the closed subset of $\Phi$
\begin{equation}
T\coloneqq\{\varepsilon_1-\varepsilon_k,\ \varepsilon_1+\varepsilon_k \mid 2\le k\le n+1\}\ \cup\ \{2\varepsilon_1\},
\end{equation}
 and define a corresponding regular subalgebra of $\mathfrak{sp}_{2n+2}(\mathbb{C})$ by
\begin{equation}
\mathfrak{s}_{T}\coloneqq \bigoplus_{\alpha\in T} (\mathfrak{sp}_{2n+2}(\mathbb{C}))_\alpha
= \bigoplus_{\alpha\in T} \mathbb{C}X_\alpha.\end{equation}
By construction,
\begin{equation}\label{sT_spanb}
\mathfrak{s}_T
=
\operatorname{span}_{\mathbb{C}}\{X_{\varepsilon_1-\varepsilon_{k+1}}, \;X_{\varepsilon_1+\varepsilon_{k+1}}, \;X_{2\varepsilon_1}\}_{k=1}^n\subset \mathfrak{sp}_{2n+2}(\mathbb{C}).
\end{equation}
Since each of the generating elements above is in $\mathfrak{sp}_{2n+2}(\mathbb{R})$, we may analogously define a subalgebra of $\mathfrak{sp}_{2n+2}(\mathbb{R})$:
\begin{equation}\label{s_spanb}
\mathfrak{s}
\coloneqq
\operatorname{span}_{\mathbb{R}}\{X_{\varepsilon_1-\varepsilon_{k+1}}, \;X_{\varepsilon_1+\varepsilon_{k+1}}, \;X_{2\varepsilon_1}\}_{k=1}^n\subset \mathfrak{sp}_{2n+2}(\mathbb{R}).
\end{equation}
Moreover, it is clear that $\mathfrak{s}_T$ is the complexification of $\mathfrak{s}$:
\begin{equation}\label{complexification}
\mathfrak{s}^\mathbb{C}=\mathfrak{s}\otimes_{\mathbb{R}} \mathbb{C}=\mathfrak{s}_T.
\end{equation}

\begin{lemma}\label{lem.iso}
The regular subalgebra $\mathfrak{s}_{T}\subset\mathfrak{sp}_{2n+2}(\mathbb{C})$ is
isomorphic to the complexification of the Heisenberg--Weyl Lie algebra
$\mathfrak{hw}_{n}^{\mathbb{C}}$; and the subalgebra
$\mathfrak{s}\subset\mathfrak{sp}_{2n+2}(\mathbb{R})$ is isomorphic to the
Heisenberg--Weyl Lie algebra $\mathfrak{hw}_{n}$. That is,
\begin{equation}
\mathfrak{hw}_n^{\mathbb{C}} \cong \mathfrak{s}_T=\mathfrak{s}^{\mathbb{C}}
\quad \text{and} \quad
\mathfrak{hw}_n \cong \mathfrak{s}.
\end{equation}
\end{lemma}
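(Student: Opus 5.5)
The plan is to write down an explicit linear isomorphism on basis elements and check that it preserves brackets. Concretely, I would define $\varphi\colon \mathfrak{hw}_n\to\mathfrak s$ by
\begin{equation}
\varphi(P_k)=X_{\varepsilon_1-\varepsilon_{k+1}},\qquad
\varphi(Q_k)=X_{\varepsilon_1+\varepsilon_{k+1}},\qquad
\varphi(Z)=2X_{2\varepsilon_1},\qquad 1\le k\le n,
\end{equation}
and extend it $\mathbb R$-linearly. The $2n+1$ target vectors are root vectors for distinct roots, so they are linearly independent by the root space decomposition Eq.~\eqref{cdecompp}. Since both spaces have dimension $2n+1$, $\varphi$ is a linear bijection. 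It then remains only to match the relations in Eq.~\eqref{commutation}.

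Before computing, a root-theoretic check indicates which brackets can be nonzero. The sum of two roots of $T$ is a root only in the case $(\varepsilon_1-\varepsilon_k)+(\varepsilon_1+\varepsilon_\ell)$ with $k=\ell$, which gives $2\varepsilon_1$. Sums such as $2\varepsilon_1\pm\varepsilon_1\pm\varepsilon_k$ or $2\varepsilon_1-\varepsilon_k-\varepsilon_\ell$ are never roots of $C_{n+1}$. Since $[\mathfrak g_\alpha,\mathfrak g_\beta]\subset\mathfrak g_{\alpha+\beta}$, all brackets among the images therefore vanish, except possibly $[X_{\varepsilon_1-\varepsilon_k},X_{\varepsilon_1+\varepsilon_k}]\in\mathbb C X_{2\varepsilon_1}$. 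This already gives that $X_{2\varepsilon_1}$ is central in $\mathfrak s$, and that the $X_{\varepsilon_1-\varepsilon_k}$ commute among themselves, as do the $X_{\varepsilon_1+\varepsilon_k}$.

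The one step that needs an actual computation is the constant in the remaining bracket, which the root argument does not determine. Here I use the block identity
\begin{equation}
\left[\begin{pmatrix}A&0\\0&-A^{\top}\end{pmatrix},\begin{pmatrix}0&B\\0&0\end{pmatrix}\right]=\begin{pmatrix}0&AB+BA^{\top}\\0&0\end{pmatrix}.
\end{equation}
Take $A=E_{1k}$ and $B=E_{1\ell}+E_{\ell 1}$ with $k,\ell\ge2$. Then $AB=\delta_{k\ell}E_{11}$ and $BA^{\top}=(E_{1\ell}+E_{\ell1})E_{k1}=\delta_{k\ell}E_{11}$. Hence
\begin{equation}
[X_{\varepsilon_1-\varepsilon_k},X_{\varepsilon_1+\varepsilon_\ell}]=2\delta_{k\ell}X_{2\varepsilon_1}.
\end{equation}
This confirms the $k\neq\ell$ vanishing directly and explains the factor $2$ in $\varphi(Z)$. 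It follows that $[\varphi(P_k),\varphi(Q_\ell)]=\delta_{k\ell}\varphi(Z)$, while all other brackets vanish, as they do in $\mathfrak{hw}_n$. So $\varphi$ is a Lie algebra isomorphism $\mathfrak{hw}_n\cong\mathfrak s$.

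Finally, $\varphi$ is defined over $\mathbb R$, and all structure constants involved are real. Extending $\varphi$ $\mathbb C$-linearly therefore gives $\mathfrak{hw}_n^{\mathbb C}=\mathfrak{hw}_n\otimes_{\mathbb R}\mathbb C\cong\mathfrak s\otimes_{\mathbb R}\mathbb C=\mathfrak s^{\mathbb C}$. Combined with Eq.~\eqref{complexification}, this yields $\mathfrak{hw}_n^{\mathbb C}\cong\mathfrak s_T$. The only real obstacle is the sign and normalisation in the bracket computation above; everything else is bookkeeping.
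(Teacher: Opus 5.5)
Your proposal is correct and follows the same route as the paper: the same map $P_k\mapsto X_{\varepsilon_1-\varepsilon_{k+1}}$, $Q_k\mapsto X_{\varepsilon_1+\varepsilon_{k+1}}$, $Z\mapsto 2X_{2\varepsilon_1}$, a bracket check against Eq.~\eqref{commutation}, and a dimension count. Your root-theoretic vanishing argument and the explicit block computation of the factor $2$ fill in what the paper calls ``direct computation''; you complexify the real isomorphism instead of treating the complex case first, and that difference is immaterial.
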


\begin{proof}
We first establish $\mathfrak{hw}_n^{\mathbb{C}} \cong \mathfrak{s}_T$.
Define
\begin{equation}\label{xyzk_def}
x_k\coloneqq X_{\varepsilon_1-\varepsilon_{k+1}},\qquad
y_k\coloneqq X_{\varepsilon_1+\varepsilon_{k+1}},\qquad
z\coloneqq [x_1,y_1]=2X_{2\varepsilon_1}\in \mathbb{C}X_{2\varepsilon_1},
\end{equation}
for $1\le k\le n$.  By construction,
\begin{equation}\label{sT_span}
\mathfrak{s}_T
=
\operatorname{span}_{\mathbb{C}}\{x_1,\dots,x_n,\, y_1,\dots,y_n,\, z\}.
\end{equation}
Direct computation shows
\begin{equation}\label{bbcc}
\begin{aligned}
[x_k,y_\ell]&=\delta_{k\ell}\,z,\\
[x_k,x_\ell]&=[y_k,y_\ell]=[z,x_k]=[z,y_k]=0,
\end{aligned}
\qquad (1\le k,\, \ell\le n).
\end{equation}
Define $\Psi_{\mathbb{C}}:\mathfrak{hw}_{n}^{\mathbb C}\to\mathfrak{s}_T$ by
\begin{equation}
\Psi_{\mathbb{C}}(P_k)=x_k,\qquad \Psi_{\mathbb{C}}(Q_k)=y_k,\qquad \Psi_{\mathbb{C}}(Z)=z\qquad (1\le k\le n).
\end{equation}
Then Eq.~\eqref{commutation} and Eq.~\eqref{bbcc} imply that $\Psi_{\mathbb{C}}$ preserves Lie brackets, so $\Psi_{\mathbb{C}}$ is a Lie algebra homomorphism.
It is surjective by Eq.~\eqref{sT_span}, and by dimension considerations it follows that $\Psi_{\mathbb{C}}$ is bijective, hence a Lie algebra isomorphism.

We may also define $\Psi_{\mathbb{R}}: \mathfrak{hw}_n \rightarrow \mathfrak{s}$ by
\begin{equation}
\Psi_{\mathbb{R}}(P_k)=x_k,\qquad \Psi_{\mathbb{R}}(Q_k)=y_k,\qquad \Psi_{\mathbb{R}}(Z)=z\qquad (1\le k\le n),
\end{equation}
which, analogously to the complex case above, is  a Lie algebra isomorphism between $\mathfrak{hw}_n$ and $\mathfrak{s}$.
\end{proof}

\begin{lemma}\label{real_extensionb}
Let $\mathfrak g$ be a real semisimple Lie algebra and let
$\mathfrak g^{\mathbb C}$ be its complexification.
Let $\mathfrak s^{\mathbb C}\subset \mathfrak g^{\mathbb C}$ be a regular subalgebra,
and assume that $\mathfrak s^{\mathbb C}$ is the complexification of a real
subalgebra $\mathfrak s\subset \mathfrak g$.
Then, every finite--dimensional complex \emph{irreducible} representation of $\mathfrak g$ remains
indecomposable when restricted to $\mathfrak s$
if and only if every finite--dimensional complex \emph{irreducible} representation of $\mathfrak g^{\mathbb C}$ remains
indecomposable when restricted to $\mathfrak s^{\mathbb C}$.
\end{lemma}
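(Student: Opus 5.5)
The argument rests on the standard correspondence between complex representations of a real Lie algebra and complex representations of its complexification. A complex representation $\rho\colon \mathfrak g\to \mathfrak{gl}(V)$ on a complex vector space $V$ extends uniquely to a complex-linear representation
\[
\rho^{\mathbb C}(X+iY)=\rho(X)+i\rho(Y)
\]
of $\mathfrak g^{\mathbb C}$. Conversely, every complex representation of $\mathfrak g^{\mathbb C}$ restricts to one of $\mathfrak g$. These two operations are mutually inverse bijections, and I would verify that they match the notions of invariance and irreducibility used in the statement.

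First, I would check that a complex subspace $W\subset V$ is $\rho(\mathfrak g)$-invariant if and only if it is $\rho^{\mathbb C}(\mathfrak g^{\mathbb C})$-invariant. One direction is trivial. For the other, if $\rho(X)W\subset W$ and $\rho(Y)W\subset W$, then $\rho(X)+i\rho(Y)$ also preserves $W$, because $W$ is a complex subspace. Consequently $\rho$ is irreducible exactly when $\rho^{\mathbb C}$ is. Since the bijection works in both directions, the finite-dimensional complex irreducible representations of $\mathfrak g$ are precisely the restrictions of those of $\mathfrak g^{\mathbb C}$.

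Second, I would apply the same reasoning to the subalgebra. Since $\mathfrak s^{\mathbb C}=\mathfrak s\otimes_{\mathbb R}\mathbb C=\mathfrak s+i\mathfrak s$ inside $\mathfrak g^{\mathbb C}$, we have $\rho^{\mathbb C}(\mathfrak s^{\mathbb C})=\rho(\mathfrak s)+i\rho(\mathfrak s)$. A complex subspace of $V$ is therefore $\mathfrak s$-invariant if and only if it is $\mathfrak s^{\mathbb C}$-invariant. It follows that a decomposition $V=W_1\oplus W_2$ into nonzero invariant complex subspaces exists for $\rho|_{\mathfrak s}$ if and only if one exists for $\rho^{\mathbb C}|_{\mathfrak s^{\mathbb C}}$. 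In other words, $\rho|_{\mathfrak s}$ is indecomposable exactly when $\rho^{\mathbb C}|_{\mathfrak s^{\mathbb C}}$ is.

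Combining the two steps gives the equivalence. Given the "if" hypothesis, take an irreducible $\rho$ of $\mathfrak g$. Its extension $\rho^{\mathbb C}$ is irreducible, so its restriction to $\mathfrak s^{\mathbb C}$ is indecomposable, and hence $\rho|_{\mathfrak s}$ is indecomposable. The converse runs the same chain backwards, starting from an irreducible representation of $\mathfrak g^{\mathbb C}$ and restricting to $\mathfrak g$.

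The only point needing care, and the main obstacle, is bookkeeping. One must insist that "complex representation of $\mathfrak g$" means a real Lie algebra action on a complex space by complex-linear operators, and that all subspaces considered are complex subspaces. Under that convention the regularity hypothesis is not actually used; it is present only so the lemma can feed into Theorem~\ref{lwide}.
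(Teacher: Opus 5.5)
Your proposal is correct and follows essentially the same route as the paper. You extend the complex $\mathfrak g$-module uniquely to a complex-linear $\mathfrak g^{\mathbb C}$-module, and you note that a complex subspace is $\mathfrak g$-invariant iff it is $\mathfrak g^{\mathbb C}$-invariant, and likewise for $\mathfrak s$ versus $\mathfrak s^{\mathbb C}=\mathfrak s+i\mathfrak s$, so irreducibility and indecomposability transfer in both directions; your observation that the regularity hypothesis is never used applies equally to the paper's proof.
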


\begin{proof}
Let $V$ be a finite-dimensional complex $\mathfrak{g}$-module. Then, $V$ extends uniquely to a complex-linear $\mathfrak{g}^\mathbb{C}$-module \cite[Proposition 4.6]{hall-lie}. Moreover, a complex subspace $V$ is $\mathfrak{g}$--invariant if and only if it is
$\mathfrak{g}^\mathbb{C}$-invariant. In particular, $V$ is $\mathfrak{g}$-irreducible if and only if it is $\mathfrak{g}^\mathbb{C}$-irreducible.

Similarly, $V$ is $\mathfrak{s}$-invariant if and only if it is invariant with respect to its complexification $\mathfrak{s}^\mathbb{C}$. Hence, $V$ is an indecomposable $\mathfrak{s}$-module if and only if it is indecomposable with respect to $\mathfrak{s}^\mathbb{C}$. The desired equivalence follows.
\end{proof}

\begin{remark}
\label{real_extension_remark}
Specializing Lemma~\ref{real_extensionb} to
$\mathfrak{s} \subset \mathfrak{sp}_{2n+2}(\mathbb{R})$ and
$\mathfrak{s}_T \subset \mathfrak{sp}_{2n+2}(\mathbb{C})$, and considering Eq.~\eqref{complexification}, the following
statements are equivalent:
\begin{itemize}
\item Every finite--dimensional irreducible complex representation of
$\mathfrak{sp}_{2n+2}(\mathbb{R})$ restricts to an
indecomposable representation of
$\mathfrak{s} \cong \mathfrak{hw}_n$.
\item Every finite--dimensional irreducible complex representation of 
$\mathfrak{sp}_{2n+2}(\mathbb{C})$ restricts to an
indecomposable representation of
$\mathfrak{s}_T \cong \mathfrak{hw}_n^{\mathbb{C}}$.
\end{itemize}
\end{remark}

Our final theorem establishes a large family of finite--dimensional indecomposable complex representations of the Heisenberg--Weyl Lie algebra $\mathfrak{hw}_n$.

\begin{theorem}
Let $\mathfrak{s}\subset \mathfrak{sp}_{2n+2}(\mathbb R)$ be the Heisenberg--Weyl
subalgebra defined in Eq.~\eqref{s_spanb}, with $\mathfrak{s}\cong\mathfrak{hw}_n$.
Then every finite--dimensional complex irreducible representation of
$\mathfrak{sp}_{2n+2}(\mathbb R)$ remains indecomposable upon
restriction to $\mathfrak{s}$.
\end{theorem}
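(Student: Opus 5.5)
By Remark~\ref{real_extension_remark}, which applies Lemma~\ref{real_extensionb} together with Eq.~\eqref{complexification}, it suffices to prove the complex statement: every finite--dimensional complex irreducible representation of $\mathfrak{sp}_{2n+2}(\mathbb C)$ remains indecomposable upon restriction to the regular subalgebra $\mathfrak s_T$. Since $\mathfrak s_T=\mathfrak s_{T,0}$, we can apply Theorem~\ref{lwide} with $\mathfrak t=0$. That theorem requires $\mathfrak t$ to contain $[\mathfrak g_\alpha,\mathfrak g_{-\alpha}]$ for every $\alpha\in T^r$. So the first step is to check that $T^r=\emptyset$. This is immediate, because every root in $T$ has positive $\varepsilon_1$-coefficient, so no negative of an element of $T$ lies in $T$. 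We should also confirm that $T$ is closed, as the paper asserts. Sums of two elements of $T$ have $\varepsilon_1$-coefficient $2$. The only roots with that coefficient are $2\varepsilon_1$, and the only sums in $T$ giving $2\varepsilon_1$ are $(\varepsilon_1-\varepsilon_k)+(\varepsilon_1+\varepsilon_k)$. So $T$ is closed.

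The main step is to verify the criterion $[T\cup(-T)]=\Phi$. Lemma~\ref{semireg} tells us that $[T\cup(-T)]$ is a symmetric closed set, so it is enough to generate every positive root from $\pm T$ by repeatedly adding pairs whose sum is a root. For $2\le k<\ell\le n+1$, we have
\[
(\varepsilon_1+\varepsilon_k)+(-(\varepsilon_1-\varepsilon_\ell))=\varepsilon_k+\varepsilon_\ell ,
\]
which is a root, and
\[
(\varepsilon_1+\varepsilon_k)+(-(\varepsilon_1+\varepsilon_\ell))=\varepsilon_k-\varepsilon_\ell .
\]
For $2\varepsilon_k$ with $k\ge2$, we use $(\varepsilon_1+\varepsilon_k)+(-(\varepsilon_1-\varepsilon_k))=2\varepsilon_k$. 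The roots $\varepsilon_1\pm\varepsilon_k$ and $2\varepsilon_1$ are already in $T$. This covers all of $\Phi^+$, and by symmetry all of $\Phi$.

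Theorem~\ref{lwide} then gives the complex statement, and the remark transfers it to $\mathfrak{sp}_{2n+2}(\mathbb R)$ and $\mathfrak s\cong\mathfrak{hw}_n$ (Lemma~\ref{lem.iso}). The only step needing any care is the closure computation, specifically making sure each sum used is genuinely a root of $C_{n+1}$. The case $n=1$ is covered by the same formulas, since then the only extra root $2\varepsilon_2$ is produced by the last identity.
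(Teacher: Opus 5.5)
Your proof is correct and follows essentially the same route as the paper: you reduce to the complex case via Remark~\ref{real_extension_remark}, apply Theorem~\ref{lwide}, and obtain the remaining roots $\varepsilon_k\pm\varepsilon_\ell$ and $2\varepsilon_k$ ($k,\ell\ge 2$) as sums of elements of $T\cup(-T)$. Your extra checks that $T$ is closed and that $T^r=\emptyset$ (so $\mathfrak t=0$ is admissible) fill in details the paper leaves implicit; one small slip there is that sums involving $2\varepsilon_1$ have $\varepsilon_1$-coefficient $3$ or $4$, not $2$, though the closedness argument is unaffected because no root of $C_{n+1}$ has $\varepsilon_1$-coefficient larger than $2$.
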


\begin{proof}
By Theorem \ref{lwide} and Lemma \ref{real_extensionb}, or more precisely Remark \ref{real_extension_remark}, it suffices to prove $\Phi = [T\cup \, (-T)]$.
We decompose $\Phi$ as follows:
\begin{equation}
\begin{aligned}
\Phi
=\;&
\bigl\{\pm(\varepsilon_1-\varepsilon_k)\bigr\}_{k=2}^{n+1} \\[2pt]
&\;\cup\;
\bigl\{\pm(\varepsilon_1+\varepsilon_k)\bigr\}_{k=2}^{n+1} \\[2pt]
&\;\cup\;
\bigl\{\pm 2\varepsilon_1\bigr\} \\[2pt]
&\;\cup\;
\bigl\{\pm(\varepsilon_k-\varepsilon_\ell)\bigr\}_{2\le k<\ell\le n+1} \\[2pt]
&\;\cup\;
\bigl\{\pm(\varepsilon_k+\varepsilon_\ell)\bigr\}_{2\le k<\ell\le n+1} \\[2pt]
&\;\cup\;
\bigl\{\pm 2\varepsilon_k\bigr\}_{k=2}^{n+1}.
\end{aligned}
\end{equation}
The first three families of roots, namely
\begin{equation}\bigl\{\pm(\varepsilon_1-\varepsilon_k)\bigr\}_{k=2}^{n+1},\;\bigl\{\pm(\varepsilon_1+\varepsilon_k)\bigr\}_{k=2}^{n+1}, \;\text{and}\; \bigl\{\pm 2\varepsilon_1\bigr\},\end{equation}
consist   of elements of $\pm T$, and hence  are elements of $ [T\cup \, (-T)]$.

The remaining roots can be written as sums of elements of $T\cup(-T)$, which thus belong to $[T\cup(-T)]$ by closure.
To illustrate these inclusions, respectively, for \(2\le k<\ell \le n+1\),
\begin{equation}
\begin{aligned}
\pm(\varepsilon_k - \varepsilon_\ell) &=\;\pm( \varepsilon_{1} -\varepsilon_{\ell} -( \varepsilon_{1} -\varepsilon_{k}))  \in  [T\cup (-T)],\\
\pm(\varepsilon_k + \varepsilon_\ell) &=\;\pm( \varepsilon_{1} +\varepsilon_{\ell} -( \varepsilon_{1} -\varepsilon_{k}))  \in  [T\cup (-T)],\\
\pm 2\varepsilon_k &=\; \pm(\varepsilon_{1}+ \varepsilon_{k} - (\varepsilon_{1} -\varepsilon_k))\in  [T\cup (-T)].
\end{aligned}
\end{equation}

Thus $\Phi \subseteq [T\cup \,(-T)]$. Since the reverse inclusion is automatic,
we have $\Phi = [T\cup \,(-T)]$, as required. 
\end{proof}

\begin{remark}[Tensor product decomposition under restriction]
Tensor product decompositions of finite--dimensional complex irreducible
representations of $\mathfrak{sp}_{2n+2}(\mathbb R)$ induce corresponding decompositions of
their restrictions to $\mathfrak{hw}_n$.
In particular, if a tensor product of complex irreducible representations of
$\mathfrak{sp}_{2n+2}(\mathbb R)$ decomposes as a direct sum of
irreducible representations of $\mathfrak{sp}_{2n+2}(\mathbb R)$, then the restricted representations of $\mathfrak{hw}_n$ decompose as a direct sum of the corresponding
indecomposable representations of $\mathfrak{hw}_n$.
Thus, combinatorial and representation--theoretic rules governing tensor product decompositions for 
$\mathfrak{sp}_{2n+2}(\mathbb R)$ apply directly to the decomposition of the associated representations of $\mathfrak{hw}_n$.
\end{remark}

\begin{remark}[Why the finite--dimensional representations are nonunitary]
Finite--dimensional unitary representations are completely reducible; in
particular, a finite--dimensional unitary representation decomposes as a direct
sum of irreducible unitary representations.
By \cite[Theorem~1.59]{folland}, every finite--dimensional irreducible unitary
representation of the real Heisenberg--Weyl Lie algebra $\mathfrak{hw}_n$ is
one--dimensional.
Therefore any finite--dimensional unitary $\mathfrak{hw}_n$--representation is a direct sum
of one--dimensional representations.
Since the $\mathfrak{hw}_n$--representations constructed in this section have dimension
greater than one and are indecomposable, they cannot be unitary.
\end{remark}

\section{Conclusion}

We have presented in this paper two distinct results on the Heisenberg-Weyl group and its algebra.   We have explicitly shown that the tensor product of two Schr\"odinger representations 
 with central characters $\lambda$ and 
$\mu$ is unitarily equivalent, when $\lambda+\mu\neq 0$, to the 
Schr\"odinger representation with central character $\lambda+\mu$ 
with infinite multiplicity.

Despite the close connection between the Heisenberg--Weyl algebra and the quantum harmonic oscillator, we observe (see Appendix) that the lowest weight state of any copy of the irrep with central character $\lambda+\mu$ in the decomposition is not a ground state for the sum of the two oscillators (except for the lowest weight state given by the product of the two individual lowest weight states).  Thus, the action of the raising operator on a lowest weight state cannot result in an eigenfunction of the harmonic oscillator either.  

Next, we constructed a natural family of finite--dimensional indecomposable representations of $\mathfrak{hw}_n$ via symplectic embedding, which are nonunitary. Specifically, we identified a subalgebra of $\mathfrak{sp}_{2n+2}(\mathbb{R})$ isomorphic to   $\mathfrak{hw}_n$  and showed that the restriction of every finite--dimensional irreducible representation of $\mathfrak{sp}_{2n+2}(\mathbb{R})$ remained indecomposable when restricted the  Heisenberg--Weyl subalgebra. This provides a large family of nonunitary indecomposable representations of  $\mathfrak{hw}_n$.

\section*{Acknowledgments}

The work of HdG is funded by NSERC of Canada.

\setcounter{section}{0}
\renewcommand\thesection{\Alph{section}}
\section{Appendix: Constructing lowest weight states}
\label{sec:appendix}
\smallskip 

In this appendix we give an explicit construction of lowest weight states using Hermite polynomials as the well-known basis functions used in quantum mechanics. We assume throughout $\lambda> 0$ and $\mu> 0$.

\subsection{Single oscillator}

Acting on (complex) functions of the real variable $u$, the derived 
Schr\"odinger representation of the Heisenberg--Weyl Lie algebra 
$\mathfrak{hw}_1$ is given by
\begin{align}
    d\pi_\lambda(Q)\psi^{\lambda}(u)&=iq\psi^{\lambda}(u)\, , \nonumber \\
    d\pi_\lambda(P)\psi^{\lambda}(u)&=\lambda \frac{d}{dq}\psi^{\lambda}(u)\, ,
    \label{eq:standardrepresentation} \\
    d\pi_\lambda(Z)\psi^{\lambda}(u)&=i\lambda \psi^{\lambda}(u)\nonumber \, ,
\end{align} 
where 
\begin{align}
    \int_{-\infty}^\infty du \vert \psi^\lambda(u)\vert^2 < \infty\, .
\end{align}
Thus the central character $\lambda$ appears as the eigenvalue of the central element $Z$, and one easily verifies (obviously)
\begin{align}
    [d\pi_\lambda(Q),d\pi_\lambda(P)]\psi^\lambda(u)=i\lambda \psi^\lambda(u)
\end{align}

As set of (normalized) basis functions $\{\psi_n^\lambda(u), n=0,1,\ldots,\infty\}$  for the carrier space of the group representation obtained by exponentiating the action of Eq.~(\ref{eq:standardrepresentation}) is given by the product of a Gaussian factor and Hermite polynomials:
\begin{align}
    \psi_n^\lambda(u)=(2^n n!)^{-1/2} (\pi\lambda)^{-1/4} e^{-u^2/2\lambda}H_n(u/\sqrt{\lambda})\, ,\qquad \lambda>0\, .
\end{align}
In particular, we have the harmonic oscillator Hamiltonian operator
\begin{equation}
\begin{aligned}
    \tilde{H}\psi^\lambda_n(u):= \frac12 \left((d\pi_\lambda(P))^2 +(d\pi_\lambda Q)^2\right)\psi^\lambda_n(u)
    &\mapsto \frac12 \left(\lambda^2\frac{d^2}{du^2}-u^2\right)\psi^\lambda_n(u)\\
    &= -\lambda (n+\textstyle\frac12)\psi^\lambda_n(u)\, ,\quad n\in \mathbf{Z}^+ \, .
    \label{eq:hostates}
\end{aligned}
\end{equation}
The sign reversal compared to ``standard'' quantum mechanics expression is due to our placement of the factor $i$ in the representation $d\pi_\lambda$.
Comparing with the standard harmonic oscillator Hamiltonian $H$ of quantum mechanics  we find that $-\tilde H=H$ and that 
$\lambda=\hbar$, \textit{i.e.} the central character is related to the ``Planck constant'' of the oscillator.  

Eq.(\ref{eq:action}) can be used to define the matrix elements $X_{mn}$ of an operator by 
\begin{align}
    X_{mn}=\int_{-\infty}^{\infty}du\,
    (d\pi_{\lambda}(X)\psi_n^\lambda(u)) \overline{\psi_{m}^{\lambda}(u)}
\end{align} and rapidly leads to 
\begin{equation}
\begin{aligned}
    Q_{mn}=i \int_{-\infty}^\infty du \psi^{\lambda}_n(u) u \psi^{\lambda}_m(u)\, ,\\
    P_{mn}=\lambda \int_{-\infty}^\infty du \psi^{\lambda}_n(u) \left(\frac{d}{du}\right)\psi^{\lambda}_m(u)
\end{aligned}
\end{equation}
    and the (infinite-dimensional) representation: 
    \begin{equation}
    \begin{aligned}
        Q &\mapsto i \sqrt{\frac{\lambda}{2}}
        \left(\begin{array}{ccccc}
        0&1&0&0&\ldots \\
        1&0&\sqrt{2}&0&\ldots \\
        0&\sqrt{2}&0&\sqrt{3}&\ldots\\
        0&0&\sqrt{3}&0 &\ldots \\
        \vdots & \vdots & \vdots &\vdots &
        \end{array}\right)\, ,\\
        P&\mapsto \sqrt{\frac{\lambda}{2}}
        \left(\begin{array}{ccccc}
        0&1&0&0&\ldots \\
        -1&0&\sqrt{2}&0&\ldots \\
        0&-\sqrt{2}&0&\sqrt{3}&\ldots\\
        0&0&-\sqrt{3}&0 &\ldots \\
        \vdots & \vdots & \vdots &\vdots &
        \end{array}\right)
    \end{aligned}
    \end{equation}
(Note again the placement of the $i$'s, which follow from the definitions of Eq.~(\ref{eq:horiginal})).
As always, one can construct a raising and a lowering operators: 
\begin{equation}
\begin{aligned}
    \hat{a}^\dagger&= \frac{1}{\sqrt{2\lambda}}\left(-i  Q- P\right)\, ,\\
    \hat{a}&= \frac{1}{\sqrt{2\lambda}}\left(-i  Q + P\right)
\end{aligned}
\end{equation}
so that 
\begin{align}
    \hat a\psi^\lambda_n(u)&=\sqrt{n}\psi^\lambda_{n-1}(u)\, ,\label{eq:alower}\\
    \hat a^\dagger\psi^\lambda_n(u)&=\sqrt{n+1}\psi^\lambda_{n+1}(u)\nonumber \, .
\end{align}
The ground state, or equivalently the lowest weight state, is just $\psi_0^{\lambda}(u)$.

\subsection{Constructing lowest weight states in the decomposition}

The action of $\hat a$ and $\hat a^\dagger$ 
does not depend on $\lambda$.  Our job in 
this section is to find linear combination (not necessarily normalized w/r to the inner product)
of 
$\psi^{\lambda}_n(u)\psi^{\mu}_m(v)$ 
so they are annihilated by 
$\hat A=\hat a_1\otimes \mathbf{1}+\mathbf{1}\otimes \hat a_2$ where, in the obvious manner
\begin{equation}
\begin{aligned}
    \hat a_1&=\frac{1}{\sqrt{2\lambda}}(-iQ_1+P_1)\, ,  \\
\hat a_2&= 
    \frac{1}{\sqrt{2\mu}}(-iQ_2+P_2)\, .
\end{aligned}
\end{equation}

The action of each factor in $\hat A$ is linear so clearly 
\begin{align}
    \hat A\psi_0^{\lambda}(u)\psi_0^{\mu}(v)=0
\end{align}
and $\Psi_{0}(u,v):=\psi_0^{\lambda}(u)\psi_0^{\mu}(v)$ is our first lowest state. 
We next consider 
\begin{equation}
\begin{aligned}
    \hat A \left(\psi_0^{\lambda}(u)\psi_1^{\mu}(v)-\psi_1^{\lambda}(u)\psi_0^{\mu}(v)\right)&=
    \hat a_1\psi_0^{\lambda}(u)\psi_1^{\mu}(v)+\hat a_2\psi_0^{\lambda}(u)\psi_1^{\mu}(v) \\
    &\quad -\hat a_1\psi_1^{\lambda}(u)\psi_0^{\mu}(v)-\hat a_2\psi_1^{\lambda}(u)\psi_0^{\mu}(v)\, , \\
    &=0+\psi_0^{\lambda}(u)\psi_0(v)-
    \psi_0^{\lambda}(u)\psi_0(v)-0=0\, .
\end{aligned}
\end{equation}
so $\Psi_1(u,v)=\left(\psi_0^{\lambda}(u)\psi_1^{\mu}(v)-\psi_1^{\lambda}(u)\psi_0^{\mu}(v)\right)$ is our second lowest weight state.

In fact, the action of (say) $\hat a_1\psi_k^{\lambda}(u)$ is the same as the action 
\begin{align}
  a_1\psi_k^{\lambda}(u) \mapsto  \frac{\partial}{\partial \xi_1} \frac{\xi_1^k}{\sqrt{k!}}
\end{align}
where the standard polynomial inner product $\langle \xi_1^p,\xi_1^q\rangle= \delta_{pq}q!$, and we identify
\begin{align}
    \frac{\xi_1^p}{\sqrt{p!}}\leftrightarrow 
    \psi^\lambda_p(u)\, .
\end{align}
The problem of finding the lowest weight state is just that of finding polynomials in the dummies $\xi_1,\xi_2$ killed by $\frac{\partial}{\partial \xi_1}+\frac{\partial}{\partial \xi_2}$.  By inspection, these are polynomials of the form $(\xi_1-\xi_2)^k$ where
\begin{align}
    \xi_1^p \xi_2^q\mapsto \sqrt{p!q!}
    \psi_p^\lambda(u)\psi_q^{\mu}(v)
\end{align}
The first few are given explicitly in Table \ref{tab:lowestwxi}. 
This clearly shows there are countably infinitely many lowest weights.  They are all eigenstates of the central element $Z$ with eigenvalue $\lambda+\mu$, and a full set of states spanning each copy of this irrep can be obtained by repeated application of the operator $\hat A^\dagger=\hat a_1^\dagger\otimes \mathbf{1}+\mathbf{1}\otimes \hat a_2^\dagger$ on a lowest weight.

The form of the lowest weight states is also interesting.  
Explicit expressions for the first few functions are given in 
Table \ref{tab:lwH}, where $\sim$ indicates we have left out some constant factors and $H_n$ is the $n$'th Hermite polynomial.

It is clear from Table \ref{tab:lowestwxi} that, for $\lambda\ne \mu$, the lowest weight states are \textit{not} eigenstates of the two-particle oscillator $H_1+H_2$, with $H$ defined in Eq.~(\ref{eq:hostates}).  
For $\lambda=\mu$, the lowest weight states can be expressed as a product of harmonic oscillator states $\psi_0(\tfrac{u+v}{\sqrt{2\lambda}})
\psi_n(\tfrac{u-v}{\sqrt{2\lambda}})$.  This case is well known as the circular oscillator, and one can obtain various sets of basis states by using polar coordinates, although the interpretation of these as states in irreducible representations of $\mathfrak{hw}$ is not common in physics.

 \bigskip 

\begin{table}[ht!]
\centering
\renewcommand{\arraystretch}{1.6}
\begin{tabular}{c c l}
\hline
$k$ & Polynomial & Function \\
\hline

$0$
&
$\displaystyle 1$
&
$\displaystyle
\psi_0^{\lambda}(u)\psi_0^{\mu}(v)
$
\\

$1$
&
$\displaystyle \xi_1-\xi_2$
&
$\displaystyle
\psi_1^{\lambda}(u)\psi_0^{\mu}(v)
-
\psi_0^{\lambda}(u)\psi_1^{\mu}(v)
$
\\

$2$
&
$\displaystyle \xi_1^2-2\xi_1\xi_2+\xi_2^2$
&
$\displaystyle
\sqrt{2}\,\psi_2^\lambda(u)\psi_0^{\mu}(v)
-2\,\psi_1^\lambda(u)\psi_1^{\mu}(v)
+\sqrt{2}\,\psi_0^\lambda(u)\psi_2^{\mu}(v)
$
\\

\hline
\end{tabular}
\caption{Some lowest weight polynomials and corresponding tensor product basis functions.}
\label{tab:lowestwxi}
\end{table}

 \begin{table}[ht!]
\centering
\renewcommand{\arraystretch}{1.6}
\begin{tabular}{c l l}
\hline
$k$ & Explicit expression & Simplified form \\
\hline

$0$
&
$\displaystyle
\frac{e^{-\frac{u^2}{2\lambda}-\frac{v^2}{2\mu}}}
{\sqrt{\pi}\,\lambda^{1/4}\mu^{1/4}}
$
&
$\displaystyle
\sim
e^{-\frac{u^2}{2\lambda}-\frac{v^2}{2\mu}}
\,H_0\!\left(
\frac{\sqrt{\lambda}\,v-\sqrt{\mu}\,u}{\sqrt{2\lambda\mu}}
\right)
$
\\[2pt]
$1$
&
$\displaystyle
\frac{
e^{-\frac{u^2}{2\lambda}-\frac{v^2}{2\mu}}
\left(\sqrt{\lambda}\,v-\sqrt{\mu}\,u\right)
}
{\sqrt{\pi}\,\lambda^{3/4}\mu^{3/4}}
$
&
$\displaystyle
\sim
e^{-\frac{u^2}{2\lambda}-\frac{v^2}{2\mu}}
\,H_1\!\left(
\frac{\sqrt{\lambda}\,v-\sqrt{\mu}\,u}{\sqrt{2\lambda\mu}}
\right)
$
\\[2pt]
$2$
&
$\displaystyle
\frac{
2\,e^{-\frac{u^2}{2\lambda}-\frac{v^2}{2\mu}}
\Bigl(
\lambda v^2
-2\sqrt{\lambda\mu}\,uv
+\mu u^2
-\lambda\mu
\Bigr)
}
{\sqrt{\pi}\,\lambda^{5/4}\mu^{5/4}}
$
&
$\displaystyle
\sim
e^{-\frac{u^2}{2\lambda}-\frac{v^2}{2\mu}}
\,H_2\!\left(
\frac{\sqrt{\lambda}\,v-\sqrt{\mu}\,u}{\sqrt{2\lambda\mu}}
\right)
$
\\
\hline
\end{tabular}
\caption{Lowest weight states as explicit functions.}
\label{tab:lwH}
\end{table}


\begin{thebibliography}{99}


\bibitem{ali-englis-rmp}
S.~T.~Ali and M.~Engli\v{s},
``Quantization methods: a guide for physicists and analysts'',
\textit{Rev.~Math.~Phys.} \textbf{17} (2005), 391--490.

\bibitem{Almheiri}
A.~Almheiri, A.~Goel and X.Y.~Hu, . 
``Quantum gravity of the Heisenberg algebra'', Journal of High Energy Physics, \textbf{8} (2024), pp.1-40.



\bibitem{arai}
A.~Arai,
\textit{Inequivalent Representations of Canonical Commutation and
Anti-Commutation Relations},
Springer, Singapore, 2020.

\bibitem{Born}
M.~Born,
\textit{Physics in my generation.  (A selection of papers)},
Pergamon Press, London, 1960



\bibitem{corwin-greenleaf}
L.~J.~Corwin and F.~P.~Greenleaf,
\textit{Representations of Nilpotent Lie Groups and Their Applications},
Cambridge Studies in Advanced Mathematics, vol.~18,
Cambridge University Press, Cambridge, 1990.


\bibitem{Cushman}
R.~Cushman, ``A momentum map for the Heisenberg group'', Symmetry \textbf{16} (2024) 1054.


\bibitem{DorfmeisterDorfmeister}
G.~Dorfmeister and J.~Dorfmeister, ``Classification of certain pairs of operators (P, Q) 
satisfying [P, Q]=-i Id'', Journal of Functional Analysis, 57 (1983) vol. 3, pp.301-328.

\bibitem{DouglasRepka2025}
A.~Douglas and J.~Repka,
``Narrow and wide regular subalgebras of semisimple Lie algebras'',
\textit{J.~Algebra} \textbf{664} (2025), 348--361.

\bibitem{folland}
G.~B.~Folland,
\textit{Harmonic Analysis in Phase Space},
Annals of Mathematics Studies, vol.~122,
Princeton University Press, Princeton, NJ, 1989.

\bibitem{folland-aha}
G.~B.~Folland,
\textit{A Course in Abstract Harmonic Analysis},
2nd ed.,
Studies in Advanced Mathematics,
CRC Press, Boca Raton, FL, 2016. 


\bibitem{folland2025}
G.~B.~Folland,
\textit{The Heisenberg Group: A Survey},
Colloquium Publications, vol.~68,
American Mathematical Society, Providence, RI, 2025.

\bibitem{Frahm}
J.~Frahm, C.~Weiske and G.~Zhang,  2023. 
``Heisenberg parabolically induced representations of Hermitian Lie groups, Part I: Intertwining operators and Weyl transform'', Advances in Mathematics,
\textbf{422} (2023), p.109001.


\bibitem{Garrison-Wong}
J.~C.~Garrison and J.~Wong. ``Canonically conjugate pairs, uncertainty relations, and phase operators'', Journal of Mathematical Physics 
vol. 11 (1970) no 8, 2242-2249.


\bibitem{Glauber}
R.~J.~Glauber, ``Coherent and Incoherent States of the Radiation Field'', Phys. Rev. \textbf{131} (1963) 2766


\bibitem{degosson}
M.~de~Gosson,
\textit{Symplectic Geometry and Quantum Mechanics},
Operator Theory: Advances and Applications, vol.~166,
Birkh\"auser, Basel, 2006.

\bibitem{hall}
B.~C.~Hall,
\textit{Quantum Theory for Mathematicians},
Graduate Texts in Mathematics, vol.~267,
Springer, New York, 2013.

\bibitem{hall-lie}
B.~C.~Hall,
\textit{Lie Groups, Lie Algebras, and Representations: An Elementary Introduction},
Graduate Texts in Mathematics, vol.~222,
Springer, New York, 2015.



\bibitem{howe}
R.~Howe,
``The role of the Heisenberg group in harmonic analysis'',
\textit{Bull.~Amer.~Math.~Soc.} (N.S.) \textbf{3} (1980), no.~2, 821--843.

\bibitem{KnappLieGroups}
A.~W.~Knapp,
\textit{Lie Groups Beyond an Introduction},
2nd ed., Progress in Mathematics, vol.~140,
Birkh\"auser, Boston, 2002.


\bibitem{mackey}
G.~W.~Mackey,
\textit{Induced Representations of Groups and Quantum Mechanics},
W.~A.~Benjamin, Inc., New York--Amsterdam, 1968.

\bibitem{panyushev}
D.~I.~Panyushev,
``Wide subalgebras of semisimple Lie algebras,''
\textit{Algebr.~Represent.~Theory} \textbf{17} (2014), no.~3, 931--944.

\bibitem{Perelomov}
A.~Perelomov, ``Generalized coherent states and some of their applications'', Soviet Physics Uspekhi, \textbf{20} (1977) 703-720.


\bibitem{Schleich}
W.~P.~Schleich, \textit{Quantum optics in phase space}. John Wiley \& Sons, 2015.

\bibitem{stone1932} M.~H.~Stone,  ``Linear transformations in Hilbert space and their applications to analysis (Vol. 15)'', 
\textit{American Mathematical Soc.} (1932)

\bibitem{Sudarshan}
E.~C.~G. Sudarshan, ``Equivalence of Semiclassical and Quantum Mechanical Descriptions of Statistical Light Beams'', Phys. Rev. Lett. \textbf{10} (1963) 277


\bibitem{summers-stone-vn}
S.~J.~Summers,
``On the Stone--von Neumann uniqueness theorem and its ramifications,''
in \textit{John von Neumann and the Foundations of Quantum Physics},
Springer,  2001, pp.~135--152.


\bibitem{thangavelu}
S.~Thangavelu,
\textit{Harmonic Analysis on the Heisenberg Group},
Progress in Mathematics, vol.~159,
Birkh\"auser, Boston, 1998.

\bibitem{VilenkinKlimyk}
N.~Ja.~Vilenkin and A.~U.~Klimyk,
\textit{Representation of Lie Groups and Special Functions, Volume~2:
Class~I Representations, Special Functions, and Integral Transforms},
Kluwer Academic Publishers, Dordrecht, 1993.


\bibitem{weyl}
H.~Weyl,
``Quantenmechanik und Gruppentheorie'',
Zeitschrift für Physik \textbf{46} (1927), 1--46.

\bibitem{Wolf}
K.~B. Wolf, \textit{The Heisenberg-Weyl ring in quantum mechanics}, in Group theory and its applications Vol. III ed EM Loebl (1986)

\end{thebibliography}
\end{document}